\documentclass{amsart}

\usepackage{amssymb}
\usepackage{amstext}
\usepackage{amsmath}
\usepackage{amscd}
\usepackage{amsthm}
\usepackage{amsfonts}
\usepackage{enumerate}
\usepackage{graphicx}
\usepackage{color}
\usepackage{here} % 図や表を強制的に出力させる
\usepackage{bm} % 太字ベクトル
\usepackage{mathrsfs}
\usepackage{mathtools}
\usepackage{latexsym}
\usepackage{booktabs}
\usepackage{fancyhdr}
\usepackage{subcaption}
\usepackage{tikz}
\usepackage{tikz-cd}
\usetikzlibrary{arrows}
\usetikzlibrary{decorations.markings}
\usetikzlibrary{patterns,decorations.pathreplacing}
\usepackage{mathtools}
\usepackage{pdfpages}
\usepackage[section]{algorithm}
\usepackage{algpseudocodex}

\usepackage[noadjust]{cite}
\usepackage{hyperref}
\hypersetup{
  colorlinks=true,
  citecolor=red,
  linkcolor=blue
}
\theoremstyle{plain}
\newtheorem{theorem}{Theorem}[section]
\newtheorem{lemma}[theorem]{Lemma}
\newtheorem{proposition}[theorem]{Proposition}
\newtheorem{corollary}[theorem]{Corollary}

\theoremstyle{definition}

\newtheorem{definition}[theorem]{Definition}

\theoremstyle{remark}

\newcommand{\groebner}{Gr\"{o}bner }

\newcommand{\intnonneg}{\mathbb{Z}_{\geq 0}}

\newcommand{\calB}{\mathcal{B}}

\newcommand{\calG}{\mathcal{G}}

\newcommand{\calU}{\mathcal{U}}

\newcommand{\ZZ}{\mathbb{Z}}

\newcommand{\kk}{\Bbbk}

\newcommand{\ab}{\mathbf{a}}

\newcommand{\eb}{\mathbf{e}}

\newcommand{\tb}{\mathbf{t}}

\DeclareMathOperator{\diag}{diag}

\DeclareMathOperator{\GL}{GL}
\DeclareMathOperator{\borel}{B}

\DeclareMathOperator{\ini}{in}

\DeclareMathOperator{\gin}{gin}
\DeclareMathOperator{\HS}{HS}
\DeclareMathOperator{\HF}{HF}

\title{Cartwright--Sturmfelsness of complementary determinantal edge ideals}
\date{}

\author[K. Tani]{Koichiro Tani}
\address{Department of Pure and Applied Mathematics, Graduate School of Information
Science and Technology, The University of Osaka, Osaka, Japan}
\email{tani-k@ist.osaka-u.ac.jp}

\subjclass[2020]{Primary 13P10; Secondary 13C40, 05E40}
\keywords{complementary determinantal edge ideal, determinantal facet ideal, Cartwright--Sturmfels ideal, universal \groebner basis, multigraded generic initial ideal}

\newcommand{\one}{\mathbf{1}}

\begin{document}
\begin{abstract}
We introduce complementary determinantal edge ideals. For a graph $G$ on $n$ vertices,
the complementary determinantal edge ideal $J_{c}(G)$ is generated by the maximal minors
of a generic $(n-2)\times n$ matrix obtained by deleting the two columns
indexed by each edge of $G$. We completely characterize the graphs
for which $J_{c}(G)$ is Cartwright--Sturmfels with respect to the grading
by columns. We prove that this property is equivalent to the existence
of a multilinear universal \groebner basis and characterize
the graphs satisfying these equivalent conditions.
In particular, the ideals satisfying these equivalent conditions are radical.
\end{abstract}
\maketitle

\section{Introduction}\label{section:Introduction}
Many classes of ideals arising from graphs have been studied, including edge ideals~\cite{VillarrealEdgeIdeal},
complementary edge ideals~\cite{FicarraMoradiComplementaryEdgeIdeals, HibiComplementaryEdgeIdeals},
binomial edge ideals~\cite{HerzogHibiBinomialEdgeIdeal,OhtaniBinomialEdgeIdeal},
and generalized binomial edge ideals~\cite{GeneralizedBinomialEdgeIdeals}.
Generators of these ideals are of squarefree monomials or binomials associated with edges of graphs.
In particular, binomial edge ideals and generalized binomial edge ideals are generated by
$2$-minors of matrices of indeterminates associated with edges of graphs.
\groebner bases of these ideals are computed in~\cite{HerzogHibiBinomialEdgeIdeal,OhtaniBinomialEdgeIdeal,GeneralizedBinomialEdgeIdeals},
and they have squarefree initial ideals, so they are radical.
As a generalization of determinantal ideals of maximal minors,
there exists a class of determinantal facet ideals~\cite{EneHerzogHibiMohammadi}.
Determinantal facet ideals have not yet been studied extensively,
and even the structure of their \groebner bases and the conditions
under which they are radical are not well understood.
Therefore, in this paper, we introduce a new class of ideals generated by maximal minors
associated with edges of graphs.

Let $G$ be a simple graph on the vertex set $[n] = \{1, \ldots, n\}$. We denote the edge set of $G$ by $E(G)$.
In terms of edge ideals, each edge $\{i,j\} \in E(G)$ is associated with the product
of two variables $x_i x_j$. In terms of complementary edge ideals, each edge $\{i,j\}$
is associated with the product of all variables except two variables $x_1\cdots x_n / x_i x_j$.
In terms of binomial edge ideals, each edge $\{i,j\}$ is associated with the determinant of the submatrix
$X_{[2], \{i,j\}}$ of a $2 \times n$ generic matrix $X$ consisting of two columns.
The class of complementary determinantal edge ideals can be viewed as a complementary
analogue of the class of binomial edge ideals. We associate each edge $\{i,j\}$
with the determinant of the submatrix consisting of all columns except two columns.
Let $X = (x_{ij})$ be a $(n-2) \times n$-matrix of indeterminates and $S = \kk[x_{ij} : 1 \leq i \leq n-2, 1 \leq j \leq n]$
be a polynomial ring over a field $\kk$.
Let $\mu_{ij} = \det X_{[n-2], [n] \setminus \{i,j\}}$.
The \textit{complementary determinantal edge ideal} of $G$ is
\begin{equation*}
 J_{c}(G)=(\mu_{ij}:ij\in E(G))\subset S.
\end{equation*}
If $G$ is the complete graph $K_n$, the complementary determinantal edge ideal $J_c(K_n)$ coincides with the determinantal ideal $I_{n-2}(X)$ of
maximal minors of $X$. Therefore, complementary determinantal edge ideals are generalizations of determinantal ideals generated by
maximal minors of $(n-2) \times n$ generic matrices.

The class of complementary determinantal edge ideals is a subclass of determinantal facet ideals.
For a graph $G$, we associate the following simplicial complex
\[
 \Delta_G=\langle F_{ij}:ij\in E(G)\rangle.
\]
If $E(G)\neq\varnothing$, it is pure of dimension $n-3$, and $J_{c}(G)$ is a determinantal facet ideal associated with $\Delta_G$.
If $n = 3$, then $J_{c}(G)$ is an ideal generated by variables. If $n = 4$, then $J_{c}(G)$ is a binomial
edge ideal of a graph with four vertices.

In this paper, we investigate the Cartwright--Sturmfels property of complementary determinantal edge ideals.
Let $n$ and $m_1, \ldots, m_n$ be positive integers.
Let $S = \kk[x_{i,j} : 1 \leq j \leq n, 1 \leq i \leq m_j]$ be a standard $\ZZ^n$-graded
polynomial ring over a field $\kk$.
We use a multigrading $\deg(x_{ij})=\eb_j\in\ZZ^n$, where
$\eb_1,\ldots,\eb_n$ are the standard basis vectors, and write $\one=(1,\ldots,1)$.
Let $\calG = \GL_{m_1}(\kk) \times \cdots \times \GL_{m_n}(\kk)$.
We assume that $\calG$ acts on $S$.
Precisely, an element $g = (g^{(1)}, \ldots, g^{(n)}) \in \calG$ acts on the variables in $S$ by
\begin{equation*}
  g \cdot x_{i,j} = \sum_{k=1}^{m_j}g_{k,i}^{(j)} x_{k,j}.
\end{equation*}
Let $\calB = \borel_{m_1}(\kk) \times \cdots \times \borel_{m_n}(\kk)$ be the Borel subgroup
of $\calG$, where each $\borel_{m_j}(\kk)$ consists of upper triangular matrices in $\GL_{m_j}(\kk)$.
In this paper, we use the term \textit{homogeneous} in the multigraded sense.
We say that a homogeneous ideal $J \subset S$ is \textit{Borel-fixed} if $g \cdot J = J$
for all $g \in \calB$.
A homogeneous ideal $I$ is \textit{Cartwright--Sturmfels}, or \textit{CS} for short,
if there exists a radical Borel-fixed ideal $J$ which has the same multigraded Hilbert series
as $I$. If $\kk$ is infinite, $I$ is CS if and only if the multigraded generic initial ideal
of $I$ is squarefree.

The notion of CS ideals was introduced by Conca, De Negri, and Gorla~\cite{ConcaDeNegriGorlaUGBandCS}.
For many classes of ideals studied in combinatorial commutative algebra,
conditions for being CS ideals have been characterized
~\cite{ConcaDeNegriGorlaUGBforMM, ConcaDeNegriGorlaUGBandCS, ConcaDeNegriGorlaMulGinofDet,
ConcaDeNegriGorlaCSassociGraph, ConcaDeNegriGorlaRadGin, ConcaDeNegriWelker, ConcaWelker,
matsushita2025standardmultigradedhibirings}.
In particular, binomial edge ideals~\cite{ConcaDeNegriGorlaCSassociGraph} and generalized
binomial edge ideals~\cite{ConcaDeNegriWelker} are CS.
In~\cite{ConcaDeNegriWelker}, Conca, De Negri, and Welker pointed out that
a classification of when ideals generated by some maximal minors are CS
seems to be out of reach. However, we can completely classify complementary determinantal edge ideals
that are CS ideals.

A homogeneous polynomial is called \textit{multilinear} if its degree is componentwise at most $\one$.
We call a \groebner basis or a universal \groebner basis multilinear if it consists of
multilinear polynomials. It is known that if $I$ is a CS ideal, then $I$ has a multilinear
universal \groebner basis. The converse is not true in general. For more details, see
Section~\ref{section:Cartwright--Sturmfels ideals}.
In this paper, we characterize the graphs $G$ for which $J_{c}(G)$ is a CS ideal
by showing that this is equivalent to $J_{c}(G)$ having a multilinear universal \groebner basis.

\begin{theorem}[Theorem~\ref{thm:main}]
  Let $n \geq 5$ and let $G$ be a simple graph on $[n]$ with at least two edges. The following
  conditions are equivalent.
  \begin{enumerate}[(i)]
  \item $J_{c}(G)$ is CS for the column grading.
  \item $J_{c}(G)$ has a multilinear universal \groebner basis.
  \item $G$ is one of the following graph:
    \begin{enumerate}[(a)]
      \item a matching with two edges on $5$ vertices;
      \item $K_{1,n-1}$, a star with $n-1$ leaves;
      \item $K_{1,n-2}+K_1$, consisting of a star with $n-2$ leaves together with one isolated vertex;
      \item $K_{n-1}+K_1$, consisting of a complete graph on $[n]\setminus\{v\}$ together with the isolated vertex $v$;
      \item a complete graph on $[n]\setminus\{v\}$ with $v$ adjacent to exactly one other vertex;
      \item $K_n$ with the edges of a matching removed.
    \end{enumerate}
  \end{enumerate}
\end{theorem}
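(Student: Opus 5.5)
We will prove the cycle (i)$\Rightarrow$(ii)$\Rightarrow$(iii)$\Rightarrow$(i). The implication (i)$\Rightarrow$(ii) is the general fact recalled above from Conca--De Negri--Gorla: a CS ideal has a universal \groebner basis of elements of degree at most $\one$. So the real work lies in the other two implications.

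For (ii)$\Rightarrow$(iii) we argue contrapositively and use restriction. If $J_c(G)$ has a multilinear universal \groebner basis, then so does any ideal obtained by a multigraded specialization compatible with term orders. Examples are setting some variables to zero, or passing to an initial ideal with respect to a partial weight that is constant on columns. To reach a small minor configuration, we specialize one column $j$ to a unit vector $\eb_k$. Then $\mu_{ab}$ with $j\notin\{a,b\}$ becomes, up to sign, a maximal minor of the $(n-3)\times(n-1)$ matrix obtained by deleting row $k$ and column $j$. The $\mu_{aj}$ instead become $(n-2)$-minors that do not involve column $j$. Iterating this should reduce every $G$ outside the list (a)--(f) to a finite set of small forbidden configurations on $5$ or $6$ vertices. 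Candidate forbidden configurations are:
\begin{itemize}
\item a path $P_3$ plus a disjoint edge;
\item two disjoint edges with $n\ge 6$;
\item a triangle plus a pendant edge elsewhere;
\item $K_{n-1}$ minus an edge, together with an isolated vertex;
\item the complement of a path of length $2$, where $K_n$ minus a matching is allowed but $K_n$ minus $P_3$ is not.
\end{itemize}
For each of these we exhibit, by explicit computation (Macaulay2), a term order and a reduced \groebner basis element of degree exceeding $\one$ in some column. We then check that this element persists in the larger $G$ under the reduction.

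The combinatorial part of this direction is to show that a graph with at least two edges that avoids all the forbidden configurations, in the sense preserved by the reduction, is one of (a)--(f). We do this by case analysis on the maximum degree and on the complement graph $\overline{G}$. If $G$ has a vertex of degree at least $n-2$, we get (b), (c), (e), or $K_n$ minus a matching. If $\overline{G}$ is a matching, we get (f). Otherwise small disjoint structures appear and yield a forbidden configuration.

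For (iii)$\Rightarrow$(i) we treat each family separately.
\begin{itemize}
\item \textbf{Family (d).} Here $J_c(G)$ consists of the maximal minors of the submatrix on the columns other than $v$, together with a free column. These are determinantal ideals of maximal minors, and such ideals are CS by Conca--De Negri--Gorla.
\item \textbf{Families (b), (c), (e).} For (b) and (c) the generators $\mu_{vj}$ are all the maximal minors of the $(n-2)\times(n-1)$ matrix obtained by deleting column $v$, restricted to a subset of columns. We use the known CS property of ideals generated by maximal minors of a matrix with specified columns, or compute the multigraded generic initial ideal directly via the Hilbert series. For (e) we use the sum of the ideals from (d) and (b)-type pieces, with a Gröbner degeneration argument.
\item \textbf{Family (f).} $K_n$ minus a matching is the hardest case. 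Our plan is to compute the multigraded Hilbert series of $J_c(G)$ by a sequence of short exact sequences or linkage, adding the missing edges one at a time. We then compare it with that of an explicit radical Borel-fixed ideal $J$, for instance the squarefree ideal generated by the images in $\gin(I_{n-2}(X))$ of the monomials for the edges of $G$.
\item \textbf{Family (a).} This is a single small case, handled by direct computation.
\end{itemize}

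The main obstacle we expect is twofold. First, we must show that the forbidden configurations really propagate to all larger $G$, which requires finding a specialization that is compatible with every term order. Second, we must establish the Hilbert series equality in case (f).
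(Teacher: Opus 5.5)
Your (i)$\Rightarrow$(ii) is fine, but both remaining directions have genuine gaps.

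\textbf{(ii)$\Rightarrow$(iii).} Specializing only column $j$ to a unit vector does not give a complementary determinantal edge ideal. The $\mu_{aj}$, $a\in N_G(j)$, survive as $(n-2)$-minors involving row $k$. You must also set the rest of row $k$ to $0$. Then the image is exactly $J_c(G-j)$ for the induced subgraph $G-j$. Setting variables to $0$, or setting to $1$ a variable alone in its grading block, preserves the existence of a multilinear universal \groebner basis, so no ``persistence'' check is needed.

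The real problem is that induced subgraphs plus your list cannot finish the classification. Neither $P_3+(n-3)K_1$ (two edges sharing a vertex, all other vertices isolated) nor $K_3+(n-3)K_1$ contains any of your configurations as an induced subgraph, yet neither is in (a)--(f). Working with induced subgraphs alone, you would essentially have to treat every $5$-vertex graph with a vertex of degree $2$.

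The missing idea is elimination at a vertex. A multihomogeneous element of degree $0$ in column $v$ only involves generators avoiding column $v$, so
\[
  J_c(G)\cap\kk[x_{aj}:j\neq v]=(\mu_{vj}:j\in N_G(v)).
\]
This is an ideal of $d_G(v)$ maximal minors of a generic $(n-2)\times(n-1)$ matrix. Take any $k$ maximal minors of an $r\times(r+1)$ matrix with $2\le k\le r-1$. A block-diagonal specialization (a generic $3\times4$ block and a diagonal of single variables, afterwards set to $1$) reduces them to two maximal minors of a $3\times4$ matrix. There, one lex computation shows the initial ideal has a non-multilinear minimal generator. This forces $d_G(v)\in\{0,1,n-2,n-1\}$ for all $v$ and all $n$ at once. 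A short counting argument then leaves (b)--(f) and matchings. Matchings with $n\ge6$ are excluded by passing to a $6$-vertex induced subgraph and the same $3\times4$ obstruction.

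\textbf{(iii)$\Rightarrow$(i).} Your description of family (d) is wrong. When $v$ is isolated, the generators $\mu_{ij}$ ($i,j\neq v$) are exactly the maximal minors of $X$ that \emph{use} column $v$. The maximal minors of $X_{[m],U_v}$ ($m=n-2$) are the $\mu_{vj}$, which is family (b). So $J_c(K_{n-1}+K_1)$ is not a determinantal ideal.

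For (c), (e), (f) you give no actual mechanism, and sums of CS ideals are not CS in general. The comparison ideal you suggest for (f) is too small. If $\{i,j\}$ is a removed edge, $J_c(G)$ and $I_m(X)$ coincide in degree $\one-\eb_j$. Borel-fixedness forces $\prod_{l\neq i,j}x_{1l}$ to span $\gin(I_m(X))$ in degree $\deg\mu_{ij}$. Hence $\gin(J_c(G))$ contains $x_{2i}\prod_{l\neq i,j}x_{1l}$, which is divisible by no $\prod_{l\neq a,b}x_{1l}$ with $\{a,b\}\in E(G)$.

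The key observation you are missing is this. If $w$ is the unique non-neighbor of $v$, expanding $X_{[m],U_v}$ with one row repeated gives $x_{aw}\mu_{vw}\in J_c(G)$ for every $a\in[m]$. Write $\mathfrak c_u=(x_{1u},\dots,x_{mu})$ and $J_v=I_m(X_{[m],U_v})$. Then:
\begin{enumerate}[(c)]
\item[(c)] $J_c(G)=J_v\cap\mathfrak c_w$;
\item[(d)] $J_c(G)=I_m(X)\cap\mathfrak c_v$;
\item[(e)] $J_c(G)=(I_m(X)\cap\mathfrak c_v)+(\mu_{vw})$;
\item[(f)] $J_c(G)=I_m(X)\cap\bigcap_{\{i,j\}\in M}(\mathfrak c_i+\mathfrak c_j)$.
\end{enumerate}
These monomial ideals are $0$ or all of $S_\ab$ in each multidegree. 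Intersecting with them therefore just discards graded components, commutes with $\gin$, and preserves the known CS property of $I_m(X)$ and $J_v$. In (e), the same degreewise description gives $\gin(J_c(G))=(\gin(I_m(X))\cap\mathfrak c_v)+(\ini(\gamma\mu_{vw}))$ for generic $\gamma\in\calG$. No Hilbert series computation or linkage is needed. Case (a) follows from the result that a sum of CS ideals generated on column sets meeting in a single column is CS.
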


This paper is organized as follows. In Section~\ref{section:Cartwright--Sturmfels ideals},
we review known results of CS ideals and introduce a basic lemma about CS.
In Section~\ref{section:Cartwright--Sturmfels complementary determinantal edge ideals},
we prove our main theorem.
In Subsection~3.1, we derive necessary conditions on the vertex
degrees for $J_{c}(G)$ to have a multilinear universal \groebner basis
and classify the graphs satisfying these conditions.
In Subsection~3.2, we determine which matchings have CS
complementary determinantal edge ideals.
Finally, in Subsection~3.3, we establish the CS property for
the remaining graph families and complete the proof of the
main theorem.

\section{Cartwright--Sturmfels ideals}\label{section:Cartwright--Sturmfels ideals}
In this section, we review basics of CS ideals.
Let $S = \kk[x_{i,j} : 1 \leq j \leq n, 1 \leq i \leq m_j]$ be a standard $\ZZ^n$-graded
polynomial ring over a field $\kk$ with $\deg(x_{ij})=\eb_j\in\ZZ^n$.
For a homogeneous ideal $I$, the residue ring
$S/I = \bigoplus_{\ab \in \intnonneg^{n}} S_{\ab}/I_{\ab}$
can be considered as a $\ZZ^{n}$-graded ring.
For $\tb = (t_1, \ldots, t_n)$ and $\ab = (a_1, \ldots, a_n) \in \intnonneg^n$,
we denote $t_1^{a_1}\cdots t_n^{a_n}$ by $\tb^{\ab}$.
The \textit{multigraded Hilbert function} and the \textit{multigraded Hilbert series} of $S/I$ are defined to be
\begin{align*}
  \HF(S/I; \ab) &= \dim_{\kk}(S/I)_{\ab}, \text{ and } \\
  \HS(S/I; \tb) &= \sum_{\ab \in \intnonneg^{n}}
  \HF(S/I; \ab) \tb^{\ab}, \text{ respectively.}
\end{align*}
For a monomial order $\preceq$, we write $\ini_{\preceq}(f)$ and $\ini_{\preceq}(I)$ for
the initial monomial of $f$ and the initial ideal of $I$, respectively.

\begin{definition}[{\cite[Definition 2.4]{ConcaDeNegriGorlaUGBandCS}}]\label{definition:Cartwright--Sturmfels ideal}
  A homogeneous ideal  $I$ is \textit{Cartwright--Sturmfels}, or \textit{CS} for short,
  if there exists a squarefree
  multigraded Borel-fixed monomial ideal $J$ such that $\HS(S/I; \tb) = \HS(S/J; \tb)$.
\end{definition}

By Definition~\ref{definition:Cartwright--Sturmfels ideal}, a homogeneous ideal $I$ is
a CS ideal if and only if $\ini_{\preceq}(I)$ is a CS
ideal for a monomial order $\preceq$ because $\HS(S/I; \tb) = \HS(S/\ini_{\preceq}(I); \tb)$.
It was shown that, if $\kk$ is an infinite field, CS ideals are characterized
in terms of multigraded generic initial ideals.
In what follows, when we consider generic initial ideals, we assume $\kk$ is infinite.
As in the $\ZZ$-graded case, for a homogeneous ideal $I$ and a monomial order $\preceq$,
there exists a Zariski open dense set $U \subset \calG$ such that the initial ideal $\ini_{\preceq}(g \cdot I)$
stays the same for all $g \in U$. We denote such an initial ideal by $\gin_{\preceq}(I)$
and call it the \textit{multigraded generic initial ideal} of $I$ with respect to $\preceq$.

\begin{proposition}[{\cite[Proposition 2.6]{ConcaDeNegriGorlaUGBandCS}}]\label{proposition:generic initial ideal of Cartwright--Sturmfels ideal}
  Let $I \subset S$ be a CS ideal and let $J \subset S$ be a squarefree
  Borel-fixed monomial ideal such that $\HS(S/I; \tb) = \HS(S/J; \tb)$.
  Then, $\gin_{\preceq}(I) = J$ for every monomial order $\preceq$.
  In particular, $I$ is radical and is generated by multilinear polynomials.
\end{proposition}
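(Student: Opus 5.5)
The plan is to compare the Borel-fixed ideal $G:=\gin_{\preceq}(I)$ with the given squarefree Borel-fixed ideal $J$, working multidegree by multidegree. First, record the standard facts about $G$. Since $\HS(S/g\cdot I;\tb)=\HS(S/I;\tb)$ for every $g\in\calG$, and passing to an initial ideal preserves the multigraded Hilbert series, we get $\HS(S/G;\tb)=\HS(S/I;\tb)=\HS(S/J;\tb)$. Next, the multigraded analogue of Galligo--Bayer--Stillman (genericity of the Zariski open set $U$) shows that $G$ is Borel-fixed. So $G$ and $J$ are two Borel-fixed monomial ideals with the same multigraded Hilbert function, and $J$ is squarefree. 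The task is to show $G=J$.

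Second, I describe squarefree Borel-fixed ideals combinatorially. A monomial of multidegree $\mathbf{b}\le\one$ is $x_{i_1,j_1}\cdots x_{i_s,j_s}$ with distinct columns $j_1,\dots,j_s$. Borel-fixedness of a squarefree monomial ideal says it is closed under lowering the row indices $i_k$ column by column. I would then show that $J$ is determined by the ``shape'' of the standard monomials in its multilinear part. Concretely, for each $\mathbf{b}\le\one$, the set of standard monomials of $S/J$ of degree $\mathbf{b}$ is an order ideal in a product of chains. The Hilbert function of $S/J$ in an arbitrary degree $\ab$ is then a sum over these order ideals, obtained by counting the monomials of degree $\ab$ whose ``support pattern'' (one variable chosen per column, using the maximal row index appearing in each column) avoids $J$.

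Third comes the core step, which I prove by induction on $|\mathbf{b}|$ for $\mathbf{b}\le\one$ and then extend to all degrees. I reduce to the case where the last variables $x_{m_j,j}$ are removed. For a Borel-fixed ideal, setting $x_{m_j,j}=0$ (equivalently, saturating or taking the generic hyperplane section in column $j$) behaves well. The Hilbert series of the quotient by $x_{m_j,j}$ is computed from $\HS$ and from the colon ideal $(G:x_{m_j,j})$. For $J$ squarefree, $J:x_{m_j,j}$ is again squarefree Borel-fixed, and the induced sequence is exact without correction terms. For $G$, Borel-fixedness gives $G:x_{m_j,j}\supseteq G$ with the Hilbert series bounded on one side. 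Comparing these yields the inequality $\HF(S/G;\ab)\ge\HF(S/J;\ab)$, with equality forcing $x_{m_j,j}$ to act on $S/G$ exactly as on $S/J$, that is, forcing $G$ to have no generator divisible by $x_{m_j,j}^2$. Iterating over all columns and rows (via the Borel moves) shows that $G$ is generated in degrees $\le\one$. Then $G$ is squarefree Borel-fixed, and by the combinatorial description of the second step, its Hilbert function determines its multilinear part. Hence $G=J$.

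Finally, the ``in particular'': $\ini_{\preceq}(g\cdot I)=J$ is squarefree for generic $g$, so $g\cdot I$ is radical and hence so is $I$. Moreover, a Gr\"obner basis of $g\cdot I$ has leading terms multilinear, hence of multidegree $\le\one$. Since all terms of a homogeneous element share its multidegree, the basis elements are multilinear, and applying $g^{-1}$ preserves multidegrees. The main obstacle is the third step: establishing the inequality with the rigidity of equality for Borel-fixed ideals in the multigraded setting, especially in positive characteristic where Borel-fixed does not mean strongly stable. Therefore I would first try to run the argument only through the colon/hyperplane-section exact sequences, which use invariance under $\calB$ but not strong stability.
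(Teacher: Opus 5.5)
The paper does not prove this statement; it quotes it from Conca--De Negri--Gorla without proof. So the question is whether your argument supplies the missing content, and it does not.

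\textbf{What is correct.} Your reduction is the right one: $G=\gin_{\preceq}(I)$ is a Borel-fixed monomial ideal with $\HS(S/G;\tb)=\HS(S/J;\tb)$. Your final ``in particular'' paragraph is also fine.

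\textbf{Two small corrections.} First, Step~1 needs $x_{1,j}\succ x_{2,j}\succ\cdots\succ x_{m_j,j}$ for every $j$. For other orders the generic initial ideal is fixed by a conjugate Borel subgroup. For example, $I=J=(x_{1,1})$ with $m_1=2$ and $x_{2,1}\succ x_{1,1}$ gives $\gin_{\preceq}(I)=(x_{2,1})\neq J$. Second, in Step~2 the membership criterion must use the \emph{minimal} row index of $u$ in each column, not the maximal one. With the maximal index it already fails for $J=(x_{1,1})$ and $u=x_{1,1}x_{2,1}$.

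\textbf{The gap: Step 3.} This step carries the whole content of the proposition and contains no argument.
\begin{itemize}
\item The inequality $\HF(S/G;\ab)\ge\HF(S/J;\ab)$ says nothing, since equality is already known from Step~1.
\item Put $y=x_{m_j,j}$. The hypothesis determines only the combination $\HS(S/G;\tb)=\HS(S/(G+(y));\tb)+t_j\HS(S/(G:y);\tb)$. You give no principle comparing $G+(y)$ and $G:y$ separately with $J+(y)$ and $J:y$.
\item ``Exact without correction terms'' has no content here. The sequence $0\to S/(J:y)(-\eb_j)\to S/J\to S/(J+(y))\to 0$ is exact for every ideal, and $J:y\neq J$ in general (e.g.\ $J=(x_{1,j},\dots,x_{m_j,j})$).
\item The conclusion you aim for would not suffice anyway. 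Take $(x_{1,1}^2)$ with $m_1=2$, or $L=(x_{1,1}x_{1,2},\,x_{1,1}x_{2,2},\,x_{2,1}x_{1,2}^2)$ with $m_1=m_2=2$. Each is Borel-fixed and has no generator divisible by any $x_{m_j,j}^2$, yet neither is generated in degrees $\le\one$.
\item Borel moves only lower row indices, so information about the last row does not propagate to the other rows.
\end{itemize}

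\textbf{What is actually needed.} The real requirement is a rigidity statement in multilinear degrees. Once $G_{\mathbf b}=J_{\mathbf b}$ for all $\mathbf b\le\one$, an easy induction gives $G=J$: if $a_j\ge 2$, then $J_{\ab}=S_{\eb_j}J_{\ab-\eb_j}\subseteq G_{\ab}$, and the dimensions agree. So one must show that two Borel-fixed monomial subspaces of $S_{\mathbf b}$ of equal dimension coincide. This has to use Hilbert function values in non-multilinear degrees, and it has to couple different columns. Two examples show why:
\begin{itemize}
\item $x_{1,2}(x_{1,1},x_{2,1})$ and $x_{1,1}(x_{1,2},x_{2,2})$ have the same Hilbert function in all degrees $\le\one$.
\item The non-squarefree $L$ above has the same Hilbert function as $x_{1,2}(x_{1,1},x_{2,1})$ in every degree $(1,a)$. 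They first differ in degree $(2,1)$, where the quotients have dimensions $2$ and $3$.
\end{itemize}
Nothing in your Step~3 addresses this. It is exactly the nontrivial input the paper takes from the cited source.
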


As a corollary of Proposition~\ref{proposition:generic initial ideal of Cartwright--Sturmfels ideal},
if $I$ is a CS ideal, then $\ini_{\preceq}(I)$ is a squarefree monomial ideal generated by multilinear monomials
for every monomial order. Therefore, every reduced \groebner basis of $I$ consists of multilinear polynomials.
Moreover, $I$ has a universal \groebner basis consisting of multilinear polynomials.
The converse is not true in general.
For example, with $\deg x_{1,1}=\deg x_{2,1}=\eb_1$ and $\deg x_{1,2}=\deg x_{2,2}=\eb_2,$
the monomial ideal $I=(x_{1,1}x_{1,2},x_{2,1}x_{2,2})$
has the multilinear universal \groebner basis
$\{x_{1,1}x_{1,2},x_{2,1}x_{2,2}\}.$
After the grading-preserving substitutions

$$
x_{2,1}\mapsto x_{1,1}+x_{2,1}
\qquad\text{and}\qquad
x_{2,2}\mapsto x_{1,2}+x_{2,2},
$$
its reduced \groebner basis with respect to the lexicographic order induced by
$x_{1,1}>x_{2,1}>x_{1,2}>x_{2,2}$ is

$$
\{x_{1,1}x_{1,2},
x_{1,1}x_{2,2}+x_{2,1}x_{1,2}+x_{2,1}x_{2,2},
x_{2,1}x_{1,2}^2+x_{2,1}x_{1,2}x_{2,2}\}.
$$
Hence the initial ideal is
$(x_{1,1}x_{1,2},x_{1,1}x_{2,2},x_{2,1}x_{1,2}^2),$
which is not squarefree. Since the CS property is invariant under grading-preserving changes of coordinates, \(I\) is not a CS ideal.

It is known that CS ideals are closed under some natural operations.
For example, the CS property is preserved under taking quotients by homogeneous
linear forms. More precisely, if $I\subset S$ is a CS ideal and
$L\in S_{\eb_i}$ for some $i\in[n]$, then $(I+(L))/(L)$ is a CS ideal
of $S/(L)$ by~\cite[Theorem 2.16(5)]{ConcaDeNegriGorlaUGBandCS}.

CS ideals are also preserved under elimination. More precisely, let $V_i\subset S_{\eb_i}$
be a vector subspace for each $i\in[n]$, and set
$R=\kk[V_1,\ldots,V_n]$.
Then $R$ is a $\ZZ^n$-graded subring of $S$, and if $I$ is a CS ideal
of $S$, then $I\cap R$ is a CS ideal of $R$
by~\cite[Theorem 2.16(6)]{ConcaDeNegriGorlaUGBandCS}.

\begin{proposition}[{\cite[Proposition 4.3]{ConcaDeNegriWelker}}]\label{proposition:connect one column}
  Fix an integer $\ell \in [n]$. Let $I$ be a CS ideal generated by polynomials
  of degree at most $\eb_1 + \cdots + \eb_{\ell}$.
  Let $J$ be a CS ideal generated by polynomials
  of degree at most $\eb_{\ell} + \cdots + \eb_{n}$. Then $I + J$ is CS.
\end{proposition}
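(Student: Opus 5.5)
The plan is to pass to generic coordinates and show that $\gin_{\preceq}(I)+\gin_{\preceq}(J)$ is the initial ideal of $I+J$ for a suitable order. Write $A=\kk[x_{i,j}: j\le \ell]$, $B=\kk[x_{i,j}: j\ge \ell]$ and $C=\kk[x_{i,\ell}]$, so that $A\cap B=C$. We may assume $\kk$ is infinite, since the CS property only depends on Hilbert series. By Proposition~\ref{proposition:generic initial ideal of Cartwright--Sturmfels ideal}, $I$ and $J$ are generated by multilinear polynomials. Under the degree hypotheses these generators lie in $A$ and $B$ respectively, so $I=I'S$ and $J=J'S$ with $I'\subset A$ and $J'\subset B$.

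First, the CS property of $I$ in $S$ is the same as that of $I'$ in $A$: extending by the free variables of columns $\ell+1,\dots,n$ multiplies the Hilbert series by a fixed factor and preserves squarefree Borel-fixedness. By Proposition~\ref{proposition:generic initial ideal of Cartwright--Sturmfels ideal}, $\gin_{\preceq}(I)=P$ and $\gin_{\preceq}(J)=Q$ are squarefree Borel-fixed ideals generated in $A$ and $B$, for every monomial order $\preceq$. The candidate is $N=P+Q$. It is a sum of squarefree monomial ideals, hence squarefree. It is Borel-fixed because $\calB$ acts columnwise, and both $P$ and $Q$ are stable under the common factor $\borel_{m_\ell}(\kk)$ as well as under the factors of their own columns. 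So it suffices to prove $\HS(S/(I+J);\tb)=\HS(S/N;\tb)$.

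Since $\calG$ also acts columnwise, a generic $g\in\calG$ is simultaneously generic for $I$ and for $J$. Then $\ini_{\preceq}(g(I+J))\supseteq \ini_{\preceq}(gI)+\ini_{\preceq}(gJ)=N$, which gives $\HF(S/(I+J))\le \HF(S/N)$ in every multidegree. The reverse inequality is the heart of the matter. I would fix an order $\preceq$, take reduced Gröbner bases $\mathcal{F}\subset A$ of $gI$ and $\mathcal{H}\subset B$ of $gJ$, which are multilinear, and show that $\mathcal{F}\cup\mathcal{H}$ is a Gröbner basis of $g(I+J)$ via Buchberger's criterion.

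S-pairs inside $\mathcal{F}$ or inside $\mathcal{H}$ already reduce to zero. For $f\in\mathcal{F}$ and $h\in\mathcal{H}$, the leading monomials can share only variables of column $\ell$. Because $\ini(f)$ and $\ini(h)$ are multilinear, they share at most one variable, and only when both contain some $x_{i,\ell}$ with the same $i$. If they share none, the S-pair reduces to zero by the coprime criterion.

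\textbf{The main obstacle is the case of a common variable $x_{i,\ell}$.} The first thing I would try is to exploit the Borel-fixedness of $P$ and $Q$ in column $\ell$. Write $\ini(f)=x_{i,\ell}u$ and $\ini(h)=x_{i,\ell}v$, where $u$ has support in columns $<\ell$ and $v$ in columns $>\ell$. I expect the relevant syzygy to show $u\,\ini(h)$, or rather the monomial $uv\,x_{k,\ell}$ for each $k\le i$, to lie in $N$ via the Borel moves $x_{i,\ell}\to x_{k,\ell}$.

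If this direct reduction is too messy, I would switch to a deformation argument. Consider $I+\lambda\cdot J$, where $\lambda$ ranges over $\GL_{m_\ell}(\kk)$ acting only on column $\ell$, and compute $\HS$ via the short exact sequence
\[
0\to S/(I\cap J)\to S/I\oplus S/J\to S/(I+J)\to 0.
\]
This reduces the problem to showing $\HS(S/(I\cap J))=\HS(S/(P\cap Q))$. Since $A$ and $B$ meet only in $C$, that equality should follow from a Tor-vanishing $\operatorname{Tor}^C_{>0}(A/I',B/J')=0$. I would in turn deduce this vanishing from the corresponding statement for the squarefree Borel-fixed ideals $P$ and $Q$, using upper semicontinuity of $\operatorname{Tor}$ under Gröbner degeneration.
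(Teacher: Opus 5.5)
The paper quotes this result from Conca--De Negri--Welker and gives no proof of its own, so your argument has to stand on its own. It does not: there is a genuine gap, and in fact the central claim is false. You aim to show $\HS(S/(I+J);\tb)=\HS(S/N;\tb)$ with $N=\gin_{\preceq}(I)+\gin_{\preceq}(J)$, which is equivalent to $\mathcal{F}\cup\mathcal{H}$ being a \groebner basis of $g(I+J)$.

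Here is a counterexample. Take $n=3$, $\ell=2$, $m_1=m_2=m_3=2$, $I=(x_{1,1}x_{1,2})$ and $J=(x_{2,2}x_{1,3})$. Both are principal ideals generated by multilinear monomials, hence CS, and both satisfy the degree hypotheses. For an order with $x_{1,j}\succ x_{2,j}$ you get $P=(x_{1,1}x_{1,2})$ and $Q=(x_{1,2}x_{1,3})$, so
\[
\HS(S/(P+Q);\tb)=(1-t_1t_2-t_2t_3+t_1t_2t_3)\,\HS(S;\tb).
\]
On the other hand, $I+J$ is a complete intersection, so
\[
\HS(S/(I+J);\tb)=(1-t_1t_2)(1-t_2t_3)\,\HS(S;\tb).
\]
The two series differ by $t_1t_2t_3(1-t_2)\HS(S;\tb)\neq0$.

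To see this at the level of S-pairs, write $gI=(L_1M_1)$ and $gJ=(M_2N_1)$ with general linear forms $L_1,M_1,M_2,N_1$ in columns $1,2,2,3$. Normalize $M_1,M_2$ to have coefficient $1$ at $x_{1,2}$. Then $N_1\cdot L_1M_1-L_1\cdot M_2N_1=L_1N_1(M_1-M_2)=c\,L_1N_1x_{2,2}$ with $c\neq0$. This element lies in $g(I+J)$ and has initial monomial $x_{1,1}x_{2,2}x_{1,3}\notin P+Q$. So the S-pair with common variable $x_{1,2}$ does not reduce to zero. The new leading term comes from moving $x_{i,\ell}$ to $x_{k,\ell}$ with $k>i$, the opposite of the Borel moves you planned to use. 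In fact $\gin_{\preceq}(I+J)=(x_{1,1}x_{1,2},\,x_{1,2}x_{1,3},\,x_{1,1}x_{2,2}x_{1,3})$. This is squarefree and Borel-fixed, as the proposition predicts, but strictly larger than $P+Q$. An even simpler case: $n=\ell=1$, $I=(x_{1,1})$, $J=(x_{2,1})$ already gives $P+Q=(x_{1,1})\neq\gin_{\preceq}(I+J)$.

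The deformation fallback targets the same false equality, since $\HS(S/(I\cap J))=\HS(S/(P\cap Q))$ is equivalent to it. Moreover, $\operatorname{Tor}^C_1(A/I',B/J')\cong(I\cap J)/IJ$. In the example this vanishes for the pair $I',J'$, whose generators are coprime. It does not vanish for the Borel-fixed pair, since $x_{1,1}x_{1,2}x_{1,3}\in(P\cap Q)\setminus PQ$. So the vanishing you wanted to derive from $P$ and $Q$ via semicontinuity fails exactly on the Borel-fixed side, where both ideals are pushed onto the same variable $x_{1,\ell}$.

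The missing idea is that the squarefree Borel-fixed ideal with the Hilbert series of $I+J$ depends on how $I$ and $J$ sit relative to each other in column $\ell$. It is not determined by $\gin_{\preceq}(I)$ and $\gin_{\preceq}(J)$ alone. A correct proof has to show directly that $\gin_{\preceq}(I+J)$ is squarefree, accounting for the extra generators created in column $\ell$ such as $x_{1,1}x_{2,2}x_{1,3}$, rather than showing that the cross S-pairs reduce to zero. That is the real content of the cited result.
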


We use the following lemma to prove our main theorem.
\begin{lemma}\label{lem:filter}
Let $Q\subset S$ be a squarefree monomial ideal such that
$Q_\ab$ is either $0$ or $S_\ab$ for each degree
$\ab$. Then, for every homogeneous ideal $I$ and every term
order $\preceq$,
\begin{equation*}
 \ini_{\preceq}(I\cap Q)=\ini_{\preceq}(I)\cap Q.
\end{equation*}
Moreover, if $I$ is CS, then $I\cap Q$ is CS.
\end{lemma}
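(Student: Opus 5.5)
The condition on $Q$ says that whether a monomial lies in $Q$ depends only on its multidegree. Since $Q$ is a monomial ideal, $Q_\ab$ is spanned by the monomials of degree $\ab$ lying in $Q$. So $Q_\ab=S_\ab$ means every monomial of degree $\ab$ is in $Q$, and $Q_\ab=0$ means none is. Let $D=\{\ab : Q_\ab=S_\ab\}$. Then $Q=\bigoplus_{\ab\in D}S_\ab$ as a graded vector space. Because $Q$ is an ideal, $D$ is an upward-closed set of degrees.

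I would prove the equality of initial ideals degree by degree, since both sides are monomial (hence homogeneous) ideals. Fix $\ab$. If $\ab\notin D$, both $(\ini_\preceq(I\cap Q))_\ab$ and $(\ini_\preceq(I)\cap Q)_\ab$ vanish. For the left side this uses that $I\cap Q$ is homogeneous, so its initial ideal is spanned by initial terms of homogeneous elements of $I\cap Q$. A homogeneous element of degree $\ab\notin D$ lying in $Q$ is zero, so no monomial of degree $\ab$ occurs.

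If $\ab\in D$, then $(I\cap Q)_\ab=I_\ab\cap S_\ab=I_\ab$. For a homogeneous ideal, $\ini_\preceq(I)_\ab$ is spanned by initial monomials of elements of $I_\ab$, and likewise for $I\cap Q$. Hence $\ini_\preceq(I\cap Q)_\ab=\ini_\preceq(I)_\ab=(\ini_\preceq(I)\cap Q)_\ab$, the last equality because $Q_\ab=S_\ab$. The one point needing care is this standard fact: the degree-$\ab$ part of the initial ideal of a multigraded ideal consists exactly of the initial monomials of its degree-$\ab$ elements. It holds because homogeneous components of an element of $I$ lie in $I$, and any monomial of degree $\ab$ in $\ini(I)$ is the initial term of some homogeneous element of degree $\ab$.

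For the CS statement, I would pass to initial and then generic initial ideals. The remark after the definition of CS gives that $I\cap Q$ is CS if and only if $\ini_\preceq(I\cap Q)=\ini_\preceq(I)\cap Q$ is CS. Next, $Q$ is stable under any grading-preserving change of coordinates $g\in\calG$, since $g$ maps $S_\ab$ to itself, so $g\cdot(I\cap Q)=g I\cap Q$. Applying the first part to $gI$ for generic $g$ gives
\[
\gin_\preceq(I\cap Q)=\gin_\preceq(I)\cap Q .
\]
To avoid assuming $\kk$ infinite, I would pass to an infinite extension field; Hilbert series are unchanged.

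Now suppose $I$ is CS. By Proposition~\ref{proposition:generic initial ideal of Cartwright--Sturmfels ideal}, $J=\gin_\preceq(I)$ is squarefree and Borel-fixed. The intersection $J\cap Q$ is a monomial ideal, squarefree because it is an intersection of squarefree monomial ideals, and Borel-fixed because both $J$ and $Q$ are $\calB$-stable. Its Hilbert series equals that of $S/(I\cap Q)$, so $I\cap Q$ is CS by Definition~\ref{definition:Cartwright--Sturmfels ideal}. In fact the argument does not even need $\gin$: taking $J$ to be the squarefree Borel-fixed ideal with the same Hilbert series as $I$, we get $\HF(S/(I\cap Q);\ab)=\HF(S/(J\cap Q);\ab)$ in each degree. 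For $\ab\notin D$ both sides equal $\dim S_\ab$, and for $\ab\in D$ they equal $\HF(S/I;\ab)=\HF(S/J;\ab)$. This degreewise comparison is the cleanest route and avoids generic coordinates entirely.

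I expect no serious obstacle. The only subtle point is the homogeneity fact used for the initial ideal in each degree.
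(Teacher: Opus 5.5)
Your proof of the initial-ideal identity is the paper's argument: degreewise, $(I\cap Q)_\ab$ is either $I_\ab$ or $0$. You simply spell out the homogeneity fact the paper leaves implicit. Your first argument for the CS part is also the paper's: take $g$ generic, note $gQ=Q$, and conclude $\gin_\preceq(I\cap Q)=\gin_\preceq(I)\cap Q$. You add an explicit check that $\gin_\preceq(I)\cap Q$ is Borel-fixed and has the right Hilbert series, whereas the paper just invokes the criterion that $I$ is CS if and only if its multigraded gin is squarefree.

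Your second argument is a genuinely different and more elementary route. Take $J$ squarefree and Borel-fixed with $\HS(S/I;\tb)=\HS(S/J;\tb)$. Then $J\cap Q$ is a squarefree, $\calB$-stable monomial ideal. The splitting $Q=\bigoplus_{\ab\in D}S_\ab$ gives $\HF(S/(I\cap Q);\ab)=\HF(S/(J\cap Q);\ab)$ degree by degree. This works directly from the definition, over any field. It needs no generic coordinates, no extension of scalars, and no appeal to the Conca--De Negri--Gorla proposition identifying the gin of a CS ideal.

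The paper's route buys the explicit formula $\gin_\preceq(I\cap Q)=\gin_\preceq(I)\cap Q$. The same device is reused in case (e) of Proposition~\ref{prop:positive}. Your route recovers this formula as well, once $\kk$ is infinite. Since $J\cap Q$ is then a CS witness for $I\cap Q$, that proposition forces $\gin_\preceq(I\cap Q)=J\cap Q=\gin_\preceq(I)\cap Q$.
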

\begin{proof}
In each degree, $(I\cap Q)_{\ab}$ is either $I_\ab$ or $0$. Therefore, we have
\begin{equation*}
 \ini_{\preceq}(I\cap Q)=\ini_{\preceq}(I)\cap Q.
\end{equation*}
The assumption also implies that $Q$ is
invariant under every coordinate change. Taking a common
sufficiently general change of coordinates gives
\[
 \gin_{\preceq}(I\cap Q)=\gin_{\preceq}(I)\cap Q.
\]
If $I$ is CS, the right-hand side is an intersection of squarefree monomial
ideals and is squarefree.
Thus, $I \cap Q$ is CS.
\end{proof}

\section{Cartwright--Sturmfels complementary determinantal edge ideals}\label{section:Cartwright--Sturmfels complementary determinantal edge ideals}
In this section, we prove our main theorem.
Throughout this section, let $G$ be a simple graph on the vertex set $[n] = \{1, \ldots, n\}$.
We denote the edge set of $G$ by $E(G)$.
Let $X = (x_{ij})$ be a $(n-2) \times n$-matrix of indeterminates and $S = \kk[x_{ij} : 1 \leq i \leq n-2, 1 \leq j \leq n]$
be a polynomial ring over a field $\kk$.
Let $N_G(v)$ be the
neighbor sets, and set $d_G(v)=|N_G(v)|$ and
$U_v=[n]\setminus\{v\}$. A matching is a graph whose distinct edges
have disjoint endpoints; isolated vertices are allowed.
If we use a lexicographic order $\preceq$, we assume that $\preceq$ is a lexicographic order induced by
\begin{equation*}
 x_{11}>\cdots>x_{1n}>x_{21}>\cdots>x_{n-2, n}.
\end{equation*}

It is trivial that all complementary determinantal edge ideals are CS if $n=3$.
If $n=4$, complementary determinantal edge ideals are binomial edge ideals and hence are CS ideals.
In addition, it is trivial that principal ideals generated by a multilinear polynomial are CS.
Therefore, we investigate the case where $n \geq 5$ and $G$ has at least two edges.
The following is our main theorem.

\begin{theorem}\label{thm:main}
Let $n \geq 5$ and let $G$ be a simple graph on $[n]$ with at least two edges. The following
conditions are equivalent.
\begin{enumerate}[(i)]
\item $J_{c}(G)$ is CS for the column grading.
\item $J_{c}(G)$ has a multilinear universal \groebner basis.
\item $G$ is one of the following graph:
  \begin{enumerate}[(a)]
    \item a matching with two edges on $5$ vertices;
    \item $K_{1,n-1}$, a star with $n-1$ leaves;
    \item $K_{1,n-2}+K_1$, consisting of a star with $n-2$ leaves together with one isolated vertex;
    \item $K_{n-1}+K_1$, consisting of a complete graph on $[n]\setminus\{v\}$ together with the isolated vertex $v$;
    \item a complete graph on $[n]\setminus\{v\}$ with $v$ adjacent to exactly one other vertex;
    \item $K_n$ with the edges of a matching removed.
  \end{enumerate}
\end{enumerate}
\end{theorem}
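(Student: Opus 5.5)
The proof runs around the cycle (i)$\Rightarrow$(ii)$\Rightarrow$(iii)$\Rightarrow$(i). The implication (i)$\Rightarrow$(ii) is the general fact recorded after Proposition~\ref{proposition:generic initial ideal of Cartwright--Sturmfels ideal}: for a CS ideal every initial ideal is squarefree and generated in multidegrees $\leq\one$, so all reduced \groebner bases are multilinear, and their union is a multilinear universal \groebner basis.

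\textbf{(ii)$\Rightarrow$(iii): degree obstructions.} Assume $J_c(G)$ has a multilinear universal \groebner basis. The aim is to find a non-multilinear element of some reduced \groebner basis for every graph outside the list.
\begin{itemize}
\item The basic witness comes from two edges $ij,kl$. Take a suitable syzygy or $S$-polynomial of $\mu_{ij}$ and $\mu_{kl}$, using the lexicographic order fixed above or a variant with permuted columns. If the two deleted column pairs overlap in the wrong way, the reduced remainder has degree $2\eb_c$ in some column $c$.
\item Such computations should involve only the columns in $\{i,j,k,l\}$ plus a few more. So I would first reduce to small configurations using Lemma~\ref{lem:filter} and the elimination and linear-quotient closure properties. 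Strictly, these properties are stated for CS and not for multilinear universal \groebner bases. For (ii) I would instead argue directly, choosing orders that make the extra columns' variables smallest, so that the bad element persists.
\item These computations should yield necessary conditions on vertex degrees, roughly: $d_G(v)\in\{0,1,n-2,n-1\}$ together with compatibility conditions. This matches the plan stated in Subsection~3.1.
\item Next, classify the graphs satisfying these conditions purely combinatorially. A vertex of degree $n-1$ or $n-2$ forces either a star-like or a co-matching-like structure. The remaining case, where every vertex has degree $0$ or $1$, is a matching.
\item Matchings need separate treatment, as in Subsection~3.2. A two-edge matching with $n=5$ should work. For three edges, or for $n\geq 6$, I would exhibit a degree-$2\eb_c$ element of a reduced \groebner basis of the ideal $(\mu_{12},\mu_{34})$.
\end{itemize}

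\textbf{(iii)$\Rightarrow$(i): CS for each family.}
\begin{itemize}
\item \emph{(d)} $J_c(K_{n-1}+K_1)$ with isolated vertex $v$ consists of all maximal minors of $X$ that contain column $v$. This equals $I_{n-2}(X)\cap Q$, where $Q$ is generated by products of variables involving column $v$ appropriately. Since $I_{n-2}(X)$ is CS~\cite{ConcaDeNegriGorlaUGBandCS}, Lemma~\ref{lem:filter} gives CS. The cleaner description is by degrees: the ideal is $I_{n-2}(X)\cap (x_{1v},\dots,x_{n-2,v})$, and that second ideal satisfies the hypothesis of the lemma.
\item \emph{(f)} For $K_n$ minus a matching, take $Q$ to be the ideal of all multidegree-$\one - \eb_i-\eb_j$ monomials over the deleted edges, intersected appropriately.
\item \emph{(b), (c), (e)} Stars are minors containing a fixed deleted column. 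I would use a Laplace-type expansion, or view $J_c(K_{1,n-1})$ as the maximal minors of the $(n-2)\times(n-1)$ matrix $X_{U_v}$ with one more column removed. I expect these to be handled by intersections with filters as above, or by Proposition~\ref{proposition:connect one column}.
\item \emph{(a)} The case $n=5$ with the matching $\{12,34\}$ is a direct computation: the two $3$-minors share column $5$ and have a single syzygy. One can show the ideal is a complete intersection with squarefree generic initial ideal, or use Proposition~\ref{proposition:connect one column}.
\end{itemize}

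\textbf{Main obstacle.} I expect the hardest part to be the necessity direction. One must produce explicit non-multilinear \groebner elements uniformly in $n$ and reduce every excluded graph to a few forbidden local configurations.
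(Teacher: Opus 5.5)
Your cycle of implications and your argument for (i)$\Rightarrow$(ii) are fine. The necessity direction (ii)$\Rightarrow$(iii), however, has a genuine gap: it describes a hope that some computation will work, and its starting point cannot work as stated.

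A witness attached to a pair of edges $ij,kl$ detects nothing by itself. $K_n$ contains both pairs of edges sharing a vertex and pairs of disjoint edges, yet $J_c(K_n)=I_{n-2}(X)$ is CS. So whether an $S$-polynomial of $\mu_{ij},\mu_{kl}$ survives depends on which other minors are present, i.e.\ on the non-edges.

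The missing idea is to eliminate one column. A homogeneous element of $J_c(G)$ of degree $0$ in column $v$ lies in $(\mu_{vj}:j\in N_G(v))$. Hence $J_c(G)\cap\kk[x_{aj}:j\neq v]$ is generated by $d_G(v)$ maximal minors of the $(n-2)\times(n-1)$ matrix $X_{[n-2],U_v}$.

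To exploit this you need closure properties for multilinear universal Gr\"obner bases themselves. You noticed that the ones in Section~2 are stated only for CS, but supplied only a vague substitute. What is needed is that the property passes to:
\begin{itemize}
\item elimination ideals, using an order in which the \emph{eliminated} variables are largest, not smallest;
\item ideals obtained by setting variables to $0$, using a degree-compatible order favoring the surviving variables;
\item ideals obtained by setting to $1$ a variable that is alone in its grading block.
\end{itemize}
With these, specialize to a block-diagonal matrix with a generic $3\times4$ block and a block $\diag(t_5,\dots,t_{r+1})$, then put all $t_i=1$. This reduces any $k$ maximal minors of an $r\times(r+1)$ generic matrix, $2\le k\le r-1$, to two maximal minors of a generic $3\times4$ matrix. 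There one explicit lex computation exhibits a monomial $y_{11}y_{13}y_{22}y_{24}y_{33}$ of degree $(1,1,2,1)$ in the initial ideal that is divisible by no multilinear element of it. That is what yields $d_G(v)\in\{0,1,n-2,n-1\}$.

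Likewise, for matchings with $n\ge6$, a Gr\"obner computation for $(\mu_{12},\mu_{34})$ is not uniform in $n$ and says nothing when a third edge is present. You need a reduction. For example, use an induced-subgraph specialization down to $n=6$. Then specialize so the first row is $(t_1,t_2,0,\dots,0)$; this gives $\mu_{14}=t_2q_2$, $\mu_{25}=t_1q_3$ and $\mu_{36}=0$, landing again on two minors of a $3\times4$ matrix.

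For (iii)$\Rightarrow$(i), your cases (a) and (d) are right, and (b) is immediate once you note $J_c(K_{1,n-1})=I_{n-2}(X_{[n-2],U_v})$. But every reverse inclusion $J_c(G)\supseteq I\cap Q$ needs a fact you never state. If $w$ is the unique non-neighbor of $v$, expand the determinant of $X_{[n-2],U_v}$ with row $a$ prepended. This gives $x_{aw}\mu_{vw}\in J_c(G)$ for every $a$. So the coefficient of a missing minor, which by degree lies in the relevant column ideal, can be absorbed.

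Your filter for (f) is wrong. The monomials of degree $\one-\eb_i-\eb_j$ generate an ideal containing $\mu_{ab}$ only if $\{a,b\}=\{i,j\}$, so no intersection or sum of such ideals over the deleted edges recovers $J_c(G)$. The correct choice is $Q_M=\bigcap_{ij\in M}(\mathfrak c_i+\mathfrak c_j)$ with $\mathfrak c_i=(x_{1i},\dots,x_{n-2,i})$. It keeps exactly the degrees with $a_i+a_j>0$ for each deleted edge, and you combine it with $(\mathfrak c_i+\mathfrak c_j)\mu_{ij}\subset J_c(G)$.

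Case (c) is $I_{n-2}(X_{[n-2],U_v})\cap\mathfrak c_w$.

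Case (e) is not of filter form. Proposition~\ref{proposition:connect one column} also does not apply, since $I_{n-2}(X)\cap\mathfrak c_v$ and $(\mu_{vw})$ overlap in far more than one column. Instead, note that $J_c(G)$ agrees with $I_{n-2}(X)$ in degrees with $a_v>0$ and with $(\mu_{vw})$ in degrees with $a_v=0$. Hence, for general $\gamma$, $\gin_{\preceq}(J_c(G))=(\gin_{\preceq}(I_{n-2}(X))\cap\mathfrak c_v)+(\ini_{\preceq}(\gamma\mu_{vw}))$, which is squarefree.
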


\subsection{Graph-theoretic necessary conditions for multilinear universal \groebner bases}
In this subsection, we begin the proof of the implication
\textup{(ii)}$\Rightarrow$\textup{(iii)} in
Theorem~\ref{thm:main} by deriving a necessary condition
on the vertex degrees.
We first show in Lemma~\ref{lem:inheritance} that the existence
of a multilinear universal \groebner basis is preserved under
certain natural operations.
Using these operations and Lemma~\ref{lem:local-obstruction}, we prove that every vertex
of $G$ must have degree in $\{0,1,n-2,n-1\}$.
We then classify the graphs satisfying this degree condition in Lemma~\ref{lem:graph-classification}.
To complete the implication, it remains to exclude matchings
with at least two edges when $n\geq6$; this is done in
Subsection~3.2.

\begin{lemma}\label{lem:inheritance}
Let $I$ be a homogeneous ideal which has a multilinear universal \groebner basis.
Then, the following ideals also have a multilinear universal \groebner basis:
\begin{enumerate}[(a)]
\item The elimination ideal $I \cap \kk[Y]$ for a subset $Y \subset X$;
\item The ideal generated by the polynomials obtained from elements of $I$ by
setting the variables in $Y$ equal to zero for a subset $Y \subset X$;
\item The ideal generated by the polynomials obtained from elements of $I$
by setting $t=1$, where $t$ is a variable of degree $\eb_i$
for some $i\in[n]$ and no other variable has degree $\eb_i$.
\end{enumerate}
\end{lemma}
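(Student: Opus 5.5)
Let $\mathcal{U}$ be a multilinear universal \groebner basis of $I$. For each of (a)--(c) I will exhibit a set of multilinear polynomials that is a \groebner basis of the new ideal for every monomial order. The common tool is that a Gr\"obner basis with respect to a suitably chosen order on the original ring transfers to the new ring. Multilinearity is preserved by each operation, since both substitutions send a monomial either to a monomial of componentwise smaller degree or to zero.

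\emph{Part (a).} Let $R=\kk[Y]$ and fix a monomial order $\preceq'$ on $R$. Extend it to an elimination order $\preceq$ on $S$ in which every variable outside $Y$ is larger than every monomial of $R$. For instance, compare first by the exponents of the variables in $X\setminus Y$ under some order, and then by $\preceq'$. The standard elimination theorem says that $\mathcal{U}\cap R$ is a \groebner basis of $I\cap R$ with respect to $\preceq'$: if the leading term of $g\in\mathcal{U}$ lies in $R$, then all of $g$ lies in $R$. Hence the finite multilinear set $\mathcal{U}\cap R$ works for every $\preceq'$. It is independent of $\preceq'$, so it is universal.

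\emph{Part (b).} Let $\pi\colon S\to R=\kk[X\setminus Y]$ set the variables of $Y$ to zero. Fix an order $\preceq'$ on $R$. Extend it to $\preceq$ on $S$ so that any monomial divisible by a variable of $Y$ is smaller than every monomial of $R$ of the same multidegree. This can be arranged by a weight order: give each variable of $Y$ weight $-1$, the others weight $0$, and break ties by $\preceq'$ followed by any order. Multidegree components are fixed, so this is a genuine monomial order on each graded piece. Since $I$ is homogeneous we may compare within one multidegree, where a weight of $-1$ is harmless.

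Now take $f\in I$ homogeneous with $\pi(f)\neq 0$. Then $\ini_\preceq(f)=\ini_{\preceq'}(\pi(f))$ and this monomial lies in $R$. It is divisible by $\ini_\preceq(g)$ for some $g\in\mathcal{U}$. That initial term lies in $R$, so $\ini_{\preceq'}(\pi(g))=\ini_\preceq(g)$ and $\pi(g)\neq0$. Therefore $\pi(\mathcal{U})\setminus\{0\}$ is a multilinear \groebner basis of $\pi(I)$ for all $\preceq'$. The main technical point is checking that this weight order is admissible; I would use the homogeneity of $I$ to justify it.

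\emph{Part (c).} Let $\phi$ set $t=1$. Every homogeneous element of $I$ has the form $t^{a}h$, and multilinearity in the $i$-th component forces $a\le 1$ for the elements of $\mathcal{U}$. Given an order $\preceq'$ on $R=\kk[X\setminus\{t\}]$, extend it to $S$ by first comparing total degree in the components other than $i$ using $\preceq'$, and then comparing the power of $t$. Since $t$ is the only variable of degree $\eb_i$, any homogeneous $f\in I$ has constant $t$-degree. So $\phi$ is injective on the monomials of $f$ and preserves their order, which gives $\ini_{\preceq'}(\phi(f))=\phi(\ini_\preceq(f))$.

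Every element of $\phi(I)$ is $\phi$ of a homogeneous element of $I$: take components and homogenize by multiplying by powers of $t$, using $t$-saturation if needed. Actually the equality $\phi(I)=\phi(I:t^\infty)$, together with the fact that $\ini(f)$ is divisible by some $\ini(g)$, already gives the divisibility needed after applying $\phi$. Hence $\phi(\mathcal{U})$ is a multilinear \groebner basis for every $\preceq'$, and so it is a universal one.
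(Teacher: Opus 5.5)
Your proposal is correct and follows the paper's proof in all three parts: the block elimination order in (a); in (b) an order that makes the monomials surviving $\pi$ maximal inside each homogeneous component; and in (c) the order comparing the $t$-free part by $\preceq'$ first and the $t$-exponent last, together with lifting elements of $\phi(I)$ to $I$ by multiplying by a power of $t$. The loose end you flagged is harmless: weight $-1$ on $Y$ is not a monomial order on $S$, but the paper's order (total degree first, then degree in $X\setminus Y$, then $\preceq'$, then any order on $\kk[Y]$) is one, and it agrees with yours on every multigraded piece; likewise, in (c) your lifting claim should be stated only for homogeneous elements of $\phi(I)$, which suffices because $\phi(I)$ is homogeneous for the grading without the $i$th coordinate (the paper avoids this reduction by writing $f=\sum a_g\phi(g)$ and multiplying by a uniform $t^N$).
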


\begin{proof}
Let $\calU$ be a multilinear universal \groebner basis of $I$
consisting of homogeneous polynomials.
We show that each ideal in (a)--(c) has a multilinear
\groebner basis with respect to an arbitrary monomial order.

For (a), fix a monomial order $\preceq_Y$ on $\kk[Y]$
and a monomial order $\preceq_{X\setminus Y}$ on
$\kk[X\setminus Y]$.
Define the product order $\preceq$ on $\kk[X]$ by first
comparing the $(X\setminus Y)$-parts using
$\preceq_{X\setminus Y}$ and then the $Y$-parts using
$\preceq_Y$.
This is an elimination order for the variables $X \setminus Y$
and restricts to $\preceq_Y$ on $\kk[Y]$.
Since $\calU$ is a \groebner basis of $I$ with respect to
$\preceq$, the elimination theorem implies that
$\calU\cap\kk[Y]$ is a \groebner basis of $I\cap\kk[Y]$
with respect to $\preceq_Y$.
It is a multilinear \groebner basis.

For (b), put $Z=X\setminus Y$, and let
$\pi:\kk[X]\to\kk[Z]$ be the homomorphism obtained by setting
the variables in $Y$ equal to zero.
Fix a monomial order $\preceq_Z$ on $\kk[Z]$.
Define a monomial order $\preceq$ on $\kk[X]$ by comparing
first total degrees, then degrees in $Z$, then the $Z$-parts
using $\preceq_Z$, and finally the $Y$-parts using any fixed
monomial order on $\kk[Y]$.
This order agrees with $\preceq_Z$ on monomials of the same total degree in $\kk[Z]$.

We claim that the nonzero elements of $\pi(\calU)$ form
a \groebner basis of $\pi(I)$ with respect to $\preceq_Z$.
It suffices to consider a nonzero homogeneous polynomial
$f\in\pi(I)$.
Choose a homogeneous polynomial $F\in I$ such that $\pi(F)=f$.
Among the terms of $F$, those that survive under $\pi$
have the largest possible degree in $Z$.
Consequently,
\[
  \ini_{\preceq}(F)=\ini_{\preceq_Z}(f).
\]
Since $\calU$ is a \groebner basis of $I$ with respect to
$\preceq$, there exists $g\in\calU$ such that
\[
  \ini_{\preceq}(g)\mid\ini_{\preceq_Z}(f).
\]
In particular, $\ini_{\preceq}(g)$ involves only variables
in $Z$, so $\pi(g)\neq0$ and
\[
  \ini_{\preceq_Z}(\pi(g))=\ini_{\preceq}(g).
\]
This proves the claim.
Every nonzero $\pi(g)$ is homogeneous of the same degree
as $g$, and hence is multilinear.

For (c), put $Y=X\setminus\{t\}$, and let
$\pi:\kk[X]\to\kk[Y]$ be the homomorphism sending $t$ to $1$.
Fix a monomial order $\preceq_Y$ on $\kk[Y]$.
Define a monomial order $\preceq$ on $\kk[X]$ by first
comparing the $Y$-parts using $\preceq_Y$ and then comparing
the $t$-exponents when the $Y$-parts coincide.
Since $t$ is the only variable in its grading block,
each $g\in\calU$ has the form $g=t^{b_g}h_g$, where $b_g\ge0$ and $h_g\in\kk[Y]$.

We claim that $\{h_g:g\in\calU\}$ is a \groebner basis
of the ideal $\pi(I)$ with respect to $\preceq_Y$.
Since $\calU$ generates $I$, the polynomials $h_g=\pi(g)$
generate $\pi(I)$.
Thus, for any nonzero $f\in\pi(I)$, we may write
\[
  f=\sum_{g\in\calU}a_g h_g,
  \qquad a_g\in\kk[Y].
\]
Choose $N$ such that $N\ge b_g$ whenever $a_g\neq0$.
Then
\[
  t^Nf
  =\sum_{\substack{g\in\calU\\a_g\neq0}}
    a_g t^{N-b_g}g
  \in I.
\]
Since $\calU$ is a \groebner basis of $I$ with respect to
$\preceq$, there exists $g\in\calU$ such that
\[
  \ini_{\preceq}(g)\mid\ini_{\preceq}(t^Nf).
\]
By the definition of $\preceq$,
\[
  \ini_{\preceq}(t^Nf)
  =t^N\ini_{\preceq_Y}(f),
  \qquad
  \ini_{\preceq}(g)
  =t^{b_g}\ini_{\preceq_Y}(h_g).
\]
Since neither $h_g$ nor $f$ involves $t$, it follows that
\[
  \ini_{\preceq_Y}(h_g)\mid\ini_{\preceq_Y}(f).
\]
This proves the claim.
After forgetting the grading coordinate corresponding to $t$,
each $h_g$ remains homogeneous and multilinear.
\end{proof}

\begin{lemma}\label{lemma:3 times 4}

  Let $Y = (y_{i,j})$ be a $3 \times 4$ generic matrix with $\deg(y_{i,j}) = \eb_j$.
  Let $p_1$ and $p_2$ be
  \begin{equation*}
    p_1 = \det Y_{[3], \{2,3,4\}} \qquad p_2 = \det Y_{[3], \{1,3,4\}}.
  \end{equation*}
  Then, the ideal $I = (p_1, p_2)$ does not have any multilinear universal \groebner basis.
\end{lemma}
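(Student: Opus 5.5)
The plan is to exhibit one explicit monomial order $\preceq$ for which the initial ideal $\ini_{\preceq}(I)$ has a minimal monomial generator that is not multilinear. This suffices. A universal \groebner basis $\calU$ of $I$ is in particular a \groebner basis for $\preceq$. So every minimal generator of $\ini_{\preceq}(I)$ equals $\ini_{\preceq}(g)$ for some $g\in\calU$, up to a scalar. Since $I$ is $\ZZ^4$-homogeneous, we may take the elements of $\calU$ homogeneous, and then $g$ has the same multidegree as that generator. If that multidegree has some coordinate $\geq 2$, then $g$ is not multilinear.

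First I would record the structure of $I$. Expanding along the first column of each minor gives
\[
p_1=\sum_{i=1}^3 y_{i,2}c_i, \qquad p_2=\sum_{i=1}^3 y_{i,1}c_i,
\]
where $c_1,c_2,c_3$ are the signed $2$-minors of the columns $3,4$ of $Y$. The polynomials $p_1,p_2$ are distinct irreducible polynomials, so they form a regular sequence. Hence $(p_1,p_2)$ is a complete intersection, its only syzygy is the Koszul syzygy, and
\[
\HS(S/I;\tb)=\frac{(1-t_2t_3t_4)(1-t_1t_3t_4)}{\prod_{j}(1-t_j)^3}.
\]
This gives every multigraded Hilbert function value of $I$ explicitly. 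In particular, it gives the dimensions of $\ini_{\preceq}(I)$ in the degrees $\eb_1+\eb_2+\eb_3+\eb_4$ and $\eb_1+\eb_2+2\eb_3+2\eb_4$, which I will use as a check.

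Next I would choose the order so that the leading terms of $p_1$ and $p_2$ overlap in columns $3$ and $4$. A concrete choice is a lex order in which the variables of columns $3$ and $4$ are largest, for instance $y_{1,3}>y_{2,4}>\cdots$. Then $\ini(p_1)=y_{1,3}y_{2,4}y_{3,2}$ and $\ini(p_2)=y_{1,3}y_{2,4}y_{3,1}$, both divisible by $y_{1,3}y_{2,4}$. Their S-polynomial is
\[
y_{3,1}p_1-y_{3,2}p_2=\sum_i (y_{3,1}y_{i,2}-y_{3,2}y_{i,1})\,c_i,
\]
which has degree $\eb_1+\eb_2+\eb_3+\eb_4$ and is still multilinear. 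I would reduce it fully and then run Buchberger's algorithm one more round. The aim is to find a reduced \groebner basis element whose degree contains $2\eb_3$ or $2\eb_4$, for example of degree $\eb_1+\eb_2+2\eb_3+\eb_4$ or $\eb_1+2\eb_2+2\eb_3+\eb_4$. Such an element would come from an S-pair between the new cubic-in-column element and $p_1$ or $p_2$ whose leading terms again share column-$3$ variables.

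The main obstacle is this last step: certifying that a particular non-multilinear monomial $m$ is a minimal generator of $\ini_{\preceq}(I)$. To do this, I would show that $m\in\ini_{\preceq}(I)$ by exhibiting the element, which is a routine computer-checkable reduction. I would then show that no proper divisor of $m$ lies in $\ini_{\preceq}(I)$. For that I would compare, in each multidegree $\ab<\deg m$, the monomials found so far against $\dim I_\ab$ computed from the Hilbert series above. Since $\dim\ini_{\preceq}(I)_\ab=\dim I_\ab$, this pins down $\ini_{\preceq}(I)_\ab$ exactly in these few small degrees. If the first lex order tried yields only multilinear generators, I would next try a mixed order: a lex order placing column $3$ first but reversing the row order in column $4$. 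That makes the diagonal and antidiagonal leading terms collide and forces a degree-$2\eb_3$ leading term.
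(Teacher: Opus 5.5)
Your strategy is the paper's: fix one lex order, form the $S$-polynomial of $p_1,p_2$, go one more round, and exhibit a non-multilinear minimal generator of $\ini_{\preceq}(I)$. The paper uses the natural lex order. There $f=y_{1,2}p_2-y_{1,1}p_1$ has $\ini_{\preceq}(f)=y_{1,1}y_{1,3}y_{2,2}y_{3,4}$, and $h=y_{2,3}f-y_{2,2}y_{1,3}p_2$ has degree $(1,1,2,1)$ and leading monomial $y_{1,1}y_{1,3}y_{2,2}y_{2,4}y_{3,3}$, which is divisible by none of the three multilinear leading monomials. \textbf{The gap} is that you stop exactly where the lemma's content lies. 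You never produce the non-multilinear generator, and you leave open whether your first order works, keeping a fallback in reserve. As written this is a plan, not a proof.

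Your first order does work, and your own dimension check closes it without reducing any further $S$-polynomial.
\begin{itemize}
\item With $y_{1,3}>y_{2,4}$ on top, $y_{3,1}p_1-y_{3,2}p_2=(y_{1,2}y_{3,1}-y_{1,1}y_{3,2})c_1+(y_{2,2}y_{3,1}-y_{2,1}y_{3,2})c_2$.
\item Only $c_2$ contains $y_{1,3}$, so the leading monomial is $C=y_{1,3}y_{3,4}m$ with $m\in\{y_{2,1}y_{3,2},\,y_{2,2}y_{3,1}\}$. It is not divisible by $A=y_{1,3}y_{2,4}y_{3,2}$ or $B=y_{1,3}y_{2,4}y_{3,1}$.
\item The only multilinear degrees with $I_{\ab}\neq 0$ are $\one-\eb_1$, $\one-\eb_2$ and $\one$. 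Moreover $\dim I_{\one}=6$, while $(A,B)$ has only $5$ monomials of degree $\one$. Hence the multilinear minimal generators of $\ini_{\preceq}(I)$ are exactly $A,B,C$.
\item In degree $\ab=\eb_1+\eb_2+\eb_3+2\eb_4$ there is no Koszul syzygy, so $\dim I_{\ab}=9+9=18$. The ideal $(A,B,C)$ contains only $9+9+3-3-1=17$ monomials of degree $\ab$, for either choice of $m$.
\item So some monomial of $\ini_{\preceq}(I)_{\ab}$ lies outside $(A,B,C)$. Any minimal generator dividing it cannot be one of $A,B,C$, hence is non-multilinear.
\end{itemize}
The decisive degree is $2\eb_4$. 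It is not one with $2\eb_3$, nor $(1,1,2,2)$, which you planned to check. You also need not show that no proper divisor of a specific monomial lies in $\ini_{\preceq}(I)$. Divisibility against the known multilinear generators, as in the paper, suffices.
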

\begin{proof}
Put $S=\kk[y_{i,j}: 1 \leq i \leq 3,1 \leq j \leq 4]$.
Since $\deg p_1=\one-\eb_1$ and $\deg p_2=\one-\eb_2$,
the only nonzero multilinear homogeneous components of $I$ are $I_{\one-\eb_1}, I_{\one-\eb_2}$ and $I_{\one}$.

Let $\preceq$ be the lexicographic order induced by natural order of the variables.
The first $S$-polynomial is
\[
  f:=S_{\preceq}(p_2,p_1)
    =y_{1,2}p_2-y_{1,1}p_1.
\]
A direct expansion gives
\[
\begin{aligned}
  \ini_{\preceq}(p_1)
    &=y_{1,2}y_{2,3}y_{3,4},\\
  \ini_{\preceq}(p_2)
    &=y_{1,1}y_{2,3}y_{3,4},\\
  \ini_{\preceq}(f)
    &=y_{1,1}y_{1,3}y_{2,2}y_{3,4}.
\end{aligned}
\]
In what follows, the $S$-polynomial computations are carried out in degrees that are not multilinear.
Indeed, the degrees of the next $S$-polynomials $S_{\preceq}(f,p_1)$ and $S_{\preceq}(f,p_2)$ are not multilinear.
Therefore, among the minimal generators of the initial ideal $\ini_{\preceq}(I)$, the multilinear ones are
$\ini_{\preceq}(p_1)$, $\ini_{\preceq}(p_2)$, and $\ini_{\preceq}(f)$.

Set
\begin{equation*}
  h = S_{\preceq}(f,p_2) = y_{2,3}f - y_{2,2}y_{1,3}p_2.
\end{equation*}
Then, we have $\deg(h) = (1,1,2,1)$ and $\ini_{\preceq}(h) = y_{1,1}y_{1,3}y_{2,2}y_{2,4}y_{3,3}.$
In particular, $\ini_{\preceq}(h) \in \ini_{\preceq}(I)$ is not multilinear.
However, $\ini_{\preceq}(h)$ is not divisible by any of
$\ini_{\preceq}(p_1)$, $\ini_{\preceq}(p_2)$, and $\ini_{\preceq}(f)$.
Therefore, among the minimal generators of $\ini_{\preceq}(I)$, there exists a non-multilinear monomial.
Thus, the ideal $I$ has no multilinear \groebner basis with respect
to $\preceq$.
In particular, $I$ has no multilinear universal
\groebner basis.
\end{proof}

\begin{lemma}\label{lem:local-obstruction}
  Let $r \geq 3$, and let $Y = (y_{i,j})$ be a $r \times (r+1)$ generic matrix
  with $\deg(y_{i,j}) = \eb_j$.
  Let
  \begin{equation*}
    p_j = \det Y_{[r],[r+1] \setminus \{j\}}.
  \end{equation*}
  An ideal $I$ generated by $k$ distinct maximal minors $p_j$ of $Y$, where
  $2\leq k\leq r-1$, does not have any multilinear universal \groebner basis.
\end{lemma}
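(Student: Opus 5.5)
Our plan is to reduce to Lemma~\ref{lemma:3 times 4} using the operations of Lemma~\ref{lem:inheritance}. We argue by contradiction: suppose $I=(p_j : j\in A)$ has a multilinear universal \groebner basis, where $A\subset[r+1]$ and $2\le |A|=k\le r-1$. Since $k\le r-1$, the complement $[r+1]\setminus A$ has at least two elements. Relabel the columns so that $1,2\in A$ and $r,r+1\notin A$. The minor $p_j$ omits column $j$. So each $p_j$ with $j\in A$ involves both columns $r$ and $r+1$, and the minors $p_1$ and $p_2$ are the ones that miss columns $1$ and $2$, respectively.

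The first step is to kill the extra minors and shrink the matrix. Pick a variable $t=y_{r,r}$ to specialize. The difficulty is that Lemma~\ref{lem:inheritance}(c) needs $t$ to be the only variable of its degree. We therefore first set to zero all other entries of column $r$, namely $y_{i,r}$ for $i<r$, using Lemma~\ref{lem:inheritance}(b). After this, each $p_j$ equals $\pm y_{r,r}$ times the $(r-1)$-minor of rows $[r-1]$ on the columns $[r+1]\setminus\{j,r\}$. Applying Lemma~\ref{lem:inheritance}(c) with $t=y_{r,r}$ then yields the ideal generated by these $(r-1)\times(r-1)$ minors of the generic $(r-1)\times r$ matrix on rows $[r-1]$ and columns $[r+1]\setminus\{r\}$. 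We must check that the resulting ideal is exactly the ideal generated by the images of the $p_j$. This holds because the image under $\pi$ of an ideal is generated by the images of its generators, and the variables $y_{r,j}$ with $j\ne r$ drop out since they lie in a different row of the specialized determinant. Those leftover variables are harmless: they become free variables, or we can set them to zero with Lemma~\ref{lem:inheritance}(b). The result is the same situation with $r$ decreased by one and the same index set $A$ (columns $1,2$ kept, column $r+1$ still omitted by no generator).

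Next we iterate. Each step decreases $r$ by one while $k$ stays fixed, so we can reach $r=k+1$, that is, $k=r-1$. To reach exactly the $3\times4$ case of Lemma~\ref{lemma:3 times 4} we also need to reduce $k$ to $2$. To drop a generator $p_j$ with $j\in A\setminus\{1,2\}$, we set to zero all variables of some column $c\notin A$ that is not column $j$ (Lemma~\ref{lem:inheritance}(b)), or more simply we set an entire column to zero. This kills every minor containing column $c$, which means every $p_j$ with $j\neq c$. That is too strong, so instead we use a specialization that isolates column $j$.

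The cleanest approach is to handle column $j$ itself. Set all entries of column $j$ to zero except $y_{r,j}$, then specialize $y_{r,j}=1$. The minor $p_j$, which omits column $j$, survives unchanged as a polynomial in the other variables. Each other $p_i$ becomes an $(r-1)$-minor on rows $[r-1]$. This changes the shape in a mixed way, so alternatively we would apply elimination, Lemma~\ref{lem:inheritance}(a), with $Y$ equal to the variables outside column $j$. We expect this bookkeeping to be the main obstacle: arranging the specializations so that the final ideal is exactly $(p_1,p_2)$ for a $3\times4$ generic matrix, with generators differing in the omitted columns $1$ and $2$. The first approach we would try is to alternate the row/column collapse above, which removes a column not in $A$ and preserves $k$ while $r\ge k+2$, with the collapse at a column $j\in A\setminus\{1,2\}$, which reduces both $r$ and $k$ by one. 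We would then verify by the determinant expansion that the surviving generators are precisely the expected minors. Once we reach $r=3$, $k=2$, Lemma~\ref{lemma:3 times 4} gives the contradiction.
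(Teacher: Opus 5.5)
\textbf{Verdict: there is a genuine gap, in the reduction of $k$.}

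\textbf{What works.} Your first reduction is sound. For a column $c\notin A$, zero $y_{1,c},\dots,y_{r-1,c}$, expand along column $c$, and set $y_{r,c}=1$ using Lemma~\ref{lem:inheritance}(b),(c). This gives the same problem for a generic $(r-1)\times r$ matrix with the same $k$. So you can reach $r=k+1$.

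\textbf{Where it breaks.} Reducing $k$ is unavoidable whenever $k\ge 3$, and none of the mechanisms you list achieves it.
\begin{enumerate}
\item Zeroing a column $c\notin A$ kills every generator, as you note.
\item Eliminating the variables of column $j$ gives $I\cap\kk[y_{a,b}: b\neq j]=(p_j)$. An element of degree $0$ in column $j$ can only come from the one generator not using column $j$. This principal ideal keeps exactly the generator you wanted to drop.
\item Zeroing column $j\in A$ except $y_{r,j}$ and setting $y_{r,j}=1$ leaves $p_j$ untouched as a full $r\times r$ minor, as you yourself compute.
\end{enumerate}
Your last paragraph then asserts that the collapse at a column $j\in A\setminus\{1,2\}$ ``reduces both $r$ and $k$ by one.'' That is not what the operation you described does, and you give no justification. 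So as written, the argument never reaches a $3\times4$ matrix with exactly two generators.

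\textbf{The missing step.} Also zero the rest of the pivot's row: set $y_{i,j}=0$ for $i<r$ and $y_{r,l}=0$ for $l\neq j$.
\begin{itemize}
\item The submatrix $Y_{[r],[r+1]\setminus\{j\}}$ defining $p_j$ now has a zero last row, so $p_j\mapsto 0$.
\item Every $p_i$ with $i\in A\setminus\{j\}$ uses column $j$, so it becomes $\pm y_{r,j}\det Y_{[r-1],[r+1]\setminus\{i,j\}}$.
\item Now $y_{r,j}$ is alone in its grading block. Lemma~\ref{lem:inheritance}(c) leaves $k-1$ maximal minors of a generic $(r-1)\times r$ matrix.
\end{itemize}
For $c\notin A$, the same row-and-column zeroing is just your first step.

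\textbf{How this matches the paper.} Doing all these collapses simultaneously is exactly the paper's proof. After permuting columns so that $I=(p_3,\dots,p_{k+2})$, it specializes $Y$ to a block matrix with a generic $3\times4$ block $Z$ and a diagonal block $\diag(t_5,\dots,t_{r+1})$. Then $p_5,\dots,p_{k+2}$ vanish because they have a zero row. Also $p_3,p_4$ become $t_5\cdots t_{r+1}$ times two maximal minors of $Z$. Setting all $t_i=1$ contradicts Lemma~\ref{lemma:3 times 4}.
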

\begin{proof}
By permuting columns of $Y$, it suffices to consider
\[
 I=(p_3,p_4,\ldots,p_{k+2}).
\]
We assume that $I$ has a multilinear universal \groebner basis.
By setting some of the variables of $Y$ equal to $0$ and replacing $y_{i-1,i}$ by $t_i$ for $i \geq 5$,
we obtain the matrix

\begin{equation*}
  Y'=
  \begin{pmatrix}
    Z & 0\\
    0 & \diag(t_5,t_6,\ldots,t_{r+1})
  \end{pmatrix},
\end{equation*}
where $Z$ is a $3 \times 4$ generic matrix and $\diag(t_5,t_6,\ldots,t_{r+1})$ is a diagonal matrix.
Let
\begin{equation*}
    q_j = \det Y'_{[r],[r+1] \setminus \{j\}}
  \end{equation*}
  and $J = (q_3, q_4, \ldots, q_{k+2})$. By Lemma~\ref{lem:inheritance}(b), the ideal $J$ has a multilinear universal \groebner basis.
  Then, $q_5, \ldots, q_{k+2}$ are zero and $q_3 = t_5\cdots t_{r+1}h_3$ and
  $q_4 = t_5\cdots t_{r+1}h_4$,
  where $h_j = \det Z_{[3], [4]\setminus \{j\}}$. By setting $t_5 = \ldots = t_{r+1} = 1$, we have that the ideal generated by
  the two minors $h_3$ and $h_4$ of the $3 \times 4$ generic matrix $Z$ has a multilinear universal 
  \groebner basis by Lemma~\ref{lem:inheritance}(c). However, it contradicts Lemma~\ref{lemma:3 times 4}.
\end{proof}

The following proposition and lemma constitute the main part of the
proof of the implication \textup{(ii)}$\Rightarrow$\textup{(iii)} in
Theorem~\ref{thm:main}.

\begin{proposition}\label{prop:degrees}
If a complementary determinantal edge ideal $J_{c}(G)$ has a multilinear universal \groebner basis, then
$d_G(v)\in\{0,1,n-2,n-1\}$ for every $v\in[n]$.
\end{proposition}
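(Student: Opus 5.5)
We argue by contraposition. Suppose some vertex $v$ has $2\le d_G(v)\le n-3$, and assume $J_c(G)$ has a multilinear universal \groebner basis. Our aim is to reduce $J_c(G)$, using the operations of Lemma~\ref{lem:inheritance}, to an ideal generated by $k$ maximal minors of a generic $r\times(r+1)$ matrix with $r=n-3\ge 2$ and $k=d_G(v)$. When $r\geq 3$, Lemma~\ref{lem:local-obstruction} then gives a contradiction. Note that $2\le k\le n-3=r$, so the extreme case $k=r$ falls outside Lemma~\ref{lem:local-obstruction} and has to be handled separately.

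The reduction is as follows. Look at column $v$ of $X$. Set to zero all entries of column $v$ except one, say $x_{n-2,v}=:t$, and also set to zero the entries $x_{n-2,j}$ for $j\neq v$; this is allowed by Lemma~\ref{lem:inheritance}(b). After this, column $v$ equals $t\eb_{n-2}$, and the last row equals $t\eb_v$. Let $Y$ be the remaining generic $(n-3)\times(n-1)$ block on the columns $U_v$.

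We now track what happens to each generator.
\begin{itemize}
\item For an edge $ij$ with $v\notin\{i,j\}$, the minor $\mu_{ij}$ becomes $\pm t\det Y_{[n-3],U_v\setminus\{i,j\}}$.
\item For an edge $vj$, the minor $\mu_{vj}$ omits column $v$, so its last row is zero and $\mu_{vj}$ becomes $0$.
\end{itemize}
This is the wrong way round: the edges at $v$ are exactly the ones we want to survive. So we do the opposite and pass to $(n-3)$-minors of the submatrix on $U_v$ without specializing column $v$ in this way.

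The better choice is to pick a neighbor $u\notin\{v\}$ to serve as the special column. Specialize column $u$ to $t\eb_{n-2}$ and row $n-2$ to $t\eb_u$. Then the edges $uj$ vanish, and each edge $ij$ with $u\notin\{i,j\}$ gives $\pm t$ times a maximal minor of the $(n-3)\times(n-1)$ generic matrix $Y$ on the columns $U_u$. Setting $t=1$ via Lemma~\ref{lem:inheritance}(c) yields the ideal of $(n-3)$-minors of $Y$ indexed by the graph $G-u$ on $n-1$ vertices. This is $J_c(G-u)$ in size $n-1$, so we get an induction on $n$.

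To land in Lemma~\ref{lem:local-obstruction}, we instead want an $r\times(r+1)$ matrix, which means deleting one further column rather than a row. For this, use elimination, Lemma~\ref{lem:inheritance}(a): intersect with $\kk[Y]$, where $Y$ omits column $w$ for some $w\ne v$. Of course $J_c(G)\cap\kk[Y]$ contains $\mu_{vw}$ only if $vw$ is an edge. So the cleaner route is specialization: choose a non-neighbor $w$ of $v$, which exists since $d_G(v)\le n-3$, and make column $w$ equal to $t\eb_{n-2}$, killing row $n-2$ elsewhere. Then the generators $\mu_{vj}$ with $j\in N_G(v)$ become $\pm t$ times the minors of the $(n-3)\times(n-1)$ matrix on $U_w$ omitting $\{v,j\}$. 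The remaining generators are the $\mu_{ij}$ with $v\notin\{i,j\}$ and $w\notin\{i,j\}$; the edges through $w$ vanish.

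The main obstacle is disposing of these remaining generators $\mu_{ij}$ with $v\notin\{i,j\}$. The planned fix is to iterate the specialization and elimination, always keeping $v$'s column, so that in the end only the minors omitting $v$ remain. Concretely, eliminate column $v$ via Lemma~\ref{lem:inheritance}(a): take $I\cap\kk[\text{columns}\ne v]$, whose generators in multilinear degree are exactly the minors omitting $v$. Since a multilinear universal \groebner basis restricts to one by the proof of Lemma~\ref{lem:inheritance}(a), the ideal $(\mu_{vj}:j\in N_G(v))$ arising this way, viewed in the matrix on $U_v$, is generated by $k=d_G(v)$ maximal minors of an $(n-2)\times(n-1)$ matrix. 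One more specialization of a row and column brings this to $r\times(r+1)$ form with $2\le k\le r-1$, and Lemma~\ref{lem:local-obstruction} finishes the argument. The boundary case $k=r$, where the reduction must be done one step earlier, and the verification that the elimination yields exactly the minors $\mu_{vj}$ are the points requiring care.
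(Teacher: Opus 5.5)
The final idea of your last paragraph is correct and is exactly the paper's proof: intersect $J_c(G)$ with $\kk[x_{aj}: j\neq v]$ and apply Lemma~\ref{lem:inheritance}(a). The earlier specializations are dead ends and can be discarded. However, the proposal as written does not close, because of an off-by-one in the size you aim for. You take $r=n-3$, so you append ``one more specialization of a row and column'' and leave a ``boundary case $k=r$'' open. That extra step does not deliver what you claim. Specializing a column $w\in U_v$ and the last row of the $(n-2)\times(n-1)$ matrix on $U_v$ leaves an $(n-3)\times(n-2)$ matrix carrying one of two things. If $w\notin N_G(v)$, all $k$ minors survive, and $k$ may then equal $n-3$, the new number of rows, which Lemma~\ref{lem:local-obstruction} does not cover. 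If $w\in N_G(v)$, $\mu_{vw}$ dies and only $k-1$ minors remain, possibly just one. For $n=5$ the result is a $2\times 3$ matrix, which violates the hypothesis $r\geq 3$ of Lemma~\ref{lem:local-obstruction} whatever you choose. So your argument as stated fails at $n=5$; a case-dependent choice of $w$ could rescue it for $n\geq 6$, but this is pointless.

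The observation you are missing is that no further reduction is needed. $X_{[n-2],U_v}$ is already a generic $r\times(r+1)$ matrix with $r=n-2$, and $r\geq 3$ once $n\geq 5$. For $n\leq 4$ the statement is vacuous, since $\{0,1,n-2,n-1\}$ then contains every possible degree. The condition $2\leq d_G(v)\leq n-3=r-1$ is exactly the range of Lemma~\ref{lem:local-obstruction}, so the ``boundary case'' does not exist.

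You should also supply the identity you flag as needing care; it is a one-line degree argument. The elimination ideal is $\ZZ^n$-graded and concentrated in degrees $\ab$ with $a_v=0$. Write such a homogeneous $f$ as $\sum_{ij\in E(G)}h_{ij}\mu_{ij}$ with $h_{ij}$ homogeneous of degree $\ab-(\one-\eb_i-\eb_j)$. Every $\mu_{ij}$ with $v\notin\{i,j\}$ has degree $1$ in column $v$, which forces $h_{ij}=0$. Each $h_{vj}$ has degree $0$ in column $v$, so it lies in $\kk[x_{aj}:j\neq v]$. Hence $J_c(G)\cap\kk[x_{aj}:j\neq v]=(\mu_{vj}:j\in N_G(v))$ in that ring, and Lemmas~\ref{lem:inheritance}(a) and~\ref{lem:local-obstruction} finish the proof with no specialization at all.
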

\begin{proof}
  For a homogeneous polynomial $f \in J_{c}(G)$, if the $v$th component of the degree of $f$ is $0$,
  the polynomial $f$ belongs to the ideal generated by $\{\mu_{vj}:j\in N_G(v)\}$.
  Therefore, we have
  \begin{equation*}\label{eq:contraction}
  J_{c}(G)\cap \kk[x_{aj}:j\neq v]
  =(\mu_{vj}:j\in N_G(v)).
  \end{equation*}
  The right-hand side is generated by $d_G(v)$ maximal minors
  from the $(n-2)\times(n-1)$ matrix with column set $U_v = [n] \setminus \{v\}$.
  Lemma~\ref{lem:inheritance}(a) and Lemma~\ref{lem:local-obstruction}
  exclude $2\leq d_G(v)\leq n-3$.
\end{proof}

\begin{lemma}\label{lem:graph-classification}
Suppose that $n\geq5$ and $d_G(v)\in\{0,1,n-2,n-1\}$ for all $v$.
Then $G$ has one of the following forms:
\begin{enumerate}[(a)]
\item a matching, together with any isolated vertices;
\item $K_{1,n-1}$, a star with $n-1$ leaves;
\item $K_{1,n-2}+K_1$, consisting of a star with $n-2$ leaves together with one isolated vertex;
\item $K_{n-1}+K_1$, consisting of a complete graph on $[n]\setminus\{v\}$ together with the isolated vertex $v$;
\item a complete graph on $[n]\setminus\{v\}$ with $v$ adjacent to exactly one other vertex;
\item $K_n$ with the edges of a matching removed.
\end{enumerate}
\end{lemma}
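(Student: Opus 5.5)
The plan is a purely graph-theoretic case analysis on the set of vertices of \emph{high} degree. Let $H=\{v: d_G(v)\in\{n-2,n-1\}\}$ and $L=\{v: d_G(v)\in\{0,1\}\}$. Since $n\ge5$, we have $n-2\ge3>1$, so these two sets partition $[n]$. We split according to $|H|$.

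\emph{Case $|H|=0$.} Every vertex has degree at most $1$, so $G$ is a matching together with isolated vertices. This is form (a).

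\emph{Case $|H|=1$, say $H=\{u\}$.} Then $u$ is adjacent to at least $n-2$ vertices of $L$. Each such neighbor $w$ has degree $1$, so its unique edge is $uw$. If $d_G(u)=n-1$, then $G=K_{1,n-1}$, which is form (b). If $d_G(u)=n-2$, exactly one vertex $z\ne u$ is not adjacent to $u$. The vertex $z$ cannot be adjacent to any neighbor of $u$, since those already have degree $1$. So $z$ is isolated and $G=K_{1,n-2}+K_1$, which is form (c).

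\emph{Case $|H|\ge2$.} Take $u\in H$. Then $u$ misses at most one vertex, so every vertex other than $u$ and possibly one vertex $u'$ is adjacent to $u$. The key counting step is to show that $|L|\le1$, and that when $|L|=1$ its vertex has degree $0$ or $1$ with specific adjacency.

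Suppose $w\in L$. A vertex $h\in H$ is nonadjacent to at most one other vertex. Since $d_G(w)\le1$, $w$ is nonadjacent to at least $n-2$ vertices, and hence to all but at most one vertex of $H$. Each $h\in H$ not adjacent to $w$ has $w$ as its unique non-neighbor.
\begin{itemize}
\item If $L$ contained two vertices $w,w'$, then with $|H|\ge2$ and $|H|=n-|L|$, some $h\in H$ would be nonadjacent to both $w$ and $w'$. To see this, note that $w$ and $w'$ each have at most one neighbor, so at most two vertices of $H$ are excluded. This uses $|H|\ge3$, which follows from $|H|=n-|L|$ and a small count using $n\ge5$; the subcase $|H|=2$ would force $|L|\ge3$ and is handled the same way. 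Such an $h$ would have degree at most $n-3$, a contradiction. So $|L|\le1$.
\item If $L=\{w\}$, then every vertex of $H$ other than the possible neighbor of $w$ is adjacent to everything except $w$. Hence $G[U_w]$ is complete. Then $G$ is $K_{n-1}+K_1$ if $d_G(w)=0$ (form (d)), or $K_{n-1}$ with $w$ attached to one vertex if $d_G(w)=1$ (form (e)).
\end{itemize}
If $L=\varnothing$, every vertex misses at most one other vertex, so the complement of $G$ has maximum degree at most $1$. That is, $G$ is $K_n$ minus a matching, which is form (f).

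The only delicate point is the second bullet: checking that two low-degree vertices force a high-degree vertex to lose two neighbors. The clean way is to pick $h\in H\setminus(N_G(w)\cup N_G(w'))$, which has size at least $|H|-2$. This set is nonempty once $|H|\ge3$, and $|H|=2$ can be handled directly by the same count or shown incompatible with degrees $\ge n-2\ge3$. Everything else is immediate bookkeeping.
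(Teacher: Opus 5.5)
Your proof is correct and follows the paper's route: the same high/low partition, the same exclusion of $|H|,|L|\ge 2$, and the same cases $|H|\in\{0,1\}$ and $|L|\in\{0,1\}$ leading to (a)--(f). The only variation is that you exclude $|H|,|L|\ge 2$ by producing a high vertex with two non-neighbors, whereas the paper double-counts high--low edges to get $h(\ell-1)\le a\le \ell$. One correction to the wording: $|H|\ge 3$ does not ``follow'' from $|H|=n-|L|$. For $|H|=2$ an arbitrary pair $w,w'\in L$ may hang at different high vertices, so you must take $w,w'$ to be two leaves of the same high vertex (they exist because its degree is at least $3$), which is what your closing remark effectively does.
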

\begin{proof}
Set
\[
  V_{\text{high}}
  =\{v\in V(G):d_G(v)\ge n-2\},
  \qquad
  V_{\text{low}}
  =\{v\in V(G):d_G(v)\le1\},
\]
and write
\[
  h=|V_{\text{high}}|,
  \qquad
  \ell=|V_{\text{low}}|.
\]
By the degree condition, these sets form a partition of $V(G)$.
In particular, $h+\ell=n$.

We first show that $h\le1$ or $\ell\le1$.
Suppose that $h,\ell\ge2$.
Let
\[
  A=
    \{(u,v)\in V_{\text{high}}\times V_{\text{low}}:
      \{u,v\}\in E(G)\}
\]
and $a = |A|$.

Each vertex in $V_{\text{low}}$ has degree at most one.
If a vertex $v \in V_{\text{low}}$ is incident to an edge in $A$,
then there exists exactly one vertex
$u \in V_{\text{high}}$ such that $(u,v) \in A$.
Therefore we have
\[
  a
  =\sum_{v\in V_{\text{low}}}
    |N_G(v)\cap V_{\text{high}}|
  \le\ell.
\]
On the other hand, each vertex in $V_{\text{high}}$
has degree at least $n-2$ and has at most $h-1$
neighbors in $V_{\text{high}}$.
It consequently has at least $(n-2)-(h-1)=\ell-1$
neighbors in $V_{\text{low}}$.
Therefore, if a vertex $u \in V_{\text{high}}$ is incident to an edge in $A$,
there are at least $\ell - 1$ vertices $v \in V_{\text{low}}$ such that $(u,v) \in A$.
We have
\[
  a
  =\sum_{u\in V_{\text{high}}}
    |N_G(u)\cap V_{\text{low}}|
  \ge h(\ell-1).
\]
Combining the two inequalities, we obtain $h(\ell-1)\le a\le\ell$, and hence
\begin{equation*}
  (h-1)(\ell-1)\le1.
\end{equation*}
Since $h,\ell\ge2$ are integers, this forces $h=\ell=2$.
Thus $n=h+\ell=4$. It contradicts $n\ge5$.
Therefore, we have $h\le1$ or $\ell\le1$.

If $h=0$,
every vertex has degree at most one, so no two distinct
edges share a vertex.
Hence $G$ is a matching, with isolated vertices allowed.
Therefore, $G$ is as in case~\textup{(a)}.

We consider the case $h=1$.
Let $V_{\text{high}}=\{v\}$.
Then $d_G(v)$ is either $n-1$ or $n-2$.
Every neighbor of $v$ belongs to $V_{\text{low}}$
and already has an incident edge joining it to $v$.
Thus none of these neighbors can be incident to another edge.

If $d_G(v)=n-1$, it follows that $G$ is as in case~\textup{(b)}.
If $d_G(v)=n-2$, let $w$ be the unique vertex other than $v$
that is not adjacent to $v$.
The vertex $w$ cannot be adjacent to any neighbor of $v$,
since each such neighbor already has degree one.
Consequently, $w$ is isolated, and $G$ is as in case~\textup{(c)}.

We consider the case $\ell=1$.
Let $V_{\text{low}}=\{v\}$.

If $v$ is isolated, every vertex in $V_{\text{high}}$ is not adjacent to $v$.
Since a vertex in $V_{\text{high}}$ has at most one non-neighbor among the
other vertices of $G$, it must be adjacent to every other
vertex in $V_{\text{high}}$.
Therefore, $G$ is as in case~\textup{(d)}.

If $d_G(v)=1$, let $w$ be its unique neighbor.
No vertex $u \in V_{\text{high}}$ with $u\neq w$ is adjacent to $v$,
so it must be adjacent to every other vertex in $V_{\text{high}}$,
including $w$.
This also shows that $w$ is adjacent to all other high vertices.
Therefore, $G$ is as in case~\textup{(e)}.

If $\ell=0$,
every vertex has degree at least $n-2$.
Hence
\[
  d_{\overline G}(v)=n-1-d_G(v)\le1
  \qquad\text{for every }v\in V(G).
\]
Therefore, $\overline G$ is a matching, with isolated vertices
allowed, and $G$ is as in case~\textup{(f)}.
\end{proof}

\subsection{Matchings}
In the previous subsection, we identify the candidate graphs for which
the corresponding complementary determinantal edge ideals may have a multilinear universal
\groebner basis.
In this subsection, we complete the proof of the implication \textup{(ii)}$\Rightarrow$\textup{(iii)}
in Theorem~\ref{thm:main}.
We prove that if $n \geq 6$, then complementary determinantal edge ideals of matchings
with at least two edges do not have any multilinear universal \groebner basis.

\begin{lemma}\label{lemma:induced}
  Assume that $J_{c}(G)$ has a multilinear universal \groebner basis for a graph $G$.
  Let $W\subset[n]$ and assume $|W|\geq5$.
  Then, for the induced subgraph $G[W]$, the complementary determinantal edge ideal
  $J_c(G[W])$ has a multilinear universal \groebner basis,
  where $J_c(G[W])$ is formed using a generic $(|W|-2)\times|W|$ matrix.
\end{lemma}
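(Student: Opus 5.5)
The plan is to remove the vertices of $[n]\setminus W$ one at a time. Each removal is realized by a specialization of the matrix $X$ that is covered by Lemma~\ref{lem:inheritance}. By induction on $n-|W|$, it suffices to treat $W=U_v=[n]\setminus\{v\}$ for a single vertex $v$, provided $n-1\geq5$. The general case follows by iterating, since after each step we are again looking at a complementary determinantal edge ideal of an induced subgraph, formed with a generic matrix of the right size.

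Fix $v$. In $X$, set equal to zero every variable in row $n-2$ except $x_{n-2,v}$, and every variable in column $v$ except $x_{n-2,v}$. Write $t=x_{n-2,v}$. After this substitution, and up to reordering columns, the matrix becomes block diagonal,
\[
 \begin{pmatrix} X' & 0\\ 0 & t\end{pmatrix},
\]
where $X'$ is a generic $(n-3)\times(n-1)$ matrix with column set $U_v$. Since the substitution is a surjective ring homomorphism, the image of $J_c(G)$ generates the ideal spanned by the images of the $\mu_{ij}$. By Lemma~\ref{lem:inheritance}(b), this ideal has a multilinear universal \groebner basis.

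Next we compute these images. If $v\in\{i,j\}$, then column $v$ is deleted, the remaining submatrix has a zero last row, and $\mu_{ij}\mapsto0$. If $v\notin\{i,j\}$, expanding along column $v$ gives
\[
 \mu_{ij}\mapsto\pm t\cdot\det X'_{[n-3],U_v\setminus\{i,j\}}.
\]
So the specialized ideal is $t\cdot J_c(G[U_v])$, where $J_c(G[U_v])$ is computed with $X'$. In the specialized ring, $t$ is the only variable of degree $\eb_v$. Lemma~\ref{lem:inheritance}(c) then lets us set $t=1$. This yields a multilinear universal \groebner basis of $J_c(G[U_v])$ for the grading on the columns $U_v$, obtained by forgetting the $v$-th coordinate.

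The main point needing care is bookkeeping rather than a real obstacle. We must check that every specialized generator stays homogeneous for the column grading (it does, with degree $\one-\eb_i-\eb_j$). We must also check that the signs $\pm$ do not affect the ideal, and that the hypotheses of Lemma~\ref{lem:inheritance}(c) hold exactly, namely that no other variable of degree $\eb_v$ survives. That holds because all other entries of column $v$ were set to zero. Finally, $X'$ must be a generic matrix of the correct size $(|U_v|-2)\times|U_v|$; it is, because we deleted exactly one row and one column. The assumption $|W|\geq5$ is only needed to stay within the range of the main theorem; the reduction itself works at each step.
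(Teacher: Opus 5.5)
Your proposal is correct and is essentially the paper's argument. In both, you specialize $X$ to a block-diagonal matrix, so that each $\mu_{ij}$ with $i,j\in W$ becomes a product of the new $t$-variables times the corresponding maximal minor of a generic $(|W|-2)\times|W|$ matrix, while all other $\mu_{ij}$ vanish; you then apply Lemma~\ref{lem:inheritance}(b) and set the $t$'s equal to $1$ using Lemma~\ref{lem:inheritance}(c). The only difference is that the paper removes all $r=n-|W|$ vertices at once with a diagonal block $\diag(t_1,\ldots,t_r)$ and applies part (c) $r$ times, whereas you remove one vertex at a time and induct, which amounts to the same thing.
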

\begin{proof}
  Set $r=n-|W|$. By permuting columns, we may assume that
  $W$ consists of the last $|W|$ vertices of $[n]$ and $[n]\setminus W$ consists of the
  first $r$ vertices of $[n]$. 

By setting some variables of $X$ equal to zero and replacing $x_{i,i}$ by $t_i$ for $i \in [r]$,
we obtain the matrix
\[
  X'=
  \begin{pmatrix}
    \diag(t_1, \ldots, t_r) & 0\\
    0 & Y
  \end{pmatrix},
\]
where
\[
  Y=(x_{i,j})_{\substack{r+1\le i\le n-2\\r+1\le j\le n}}
\]
is a generic $(|W|-2)\times|W|$ matrix.
For distinct $i,j\in W$, put
\[
  \nu_{ij}
  :=\det X_{\{r+1,\ldots,n-2\},\,W\setminus\{i,j\}}.
\]
These are the maximal minors of $Y$ corresponding to the
pairs $\{i,j\}\subset W$.
If $i,j\in W$, the submatrix $X'_{[n-2], [n]\setminus \{i,j\}}$
contains the entire diagonal block, and hence
\[
  \det X'_{[n-2], [n]\setminus \{i,j\}} = t_1\cdots t_r \nu_{ij}.
\]
If at least one of $i,j$ belongs to $[r]$, deleting that
column leaves the corresponding row of the submatrix
identically zero.
Therefore,
\[
  \det X'_{[n-2], [n]\setminus \{i,j\}} =
  \begin{cases}
    t_1\cdots t_r\nu_{ij}, & i,j\in W,\\
    0, & \text{otherwise}.
  \end{cases}
\]

By Lemma~\ref{lem:inheritance}(b), the ideal
\begin{equation*}
  (\det X'_{[n-2], [n]\setminus \{i,j\}} : \{i,j\} \in E(G))
\end{equation*}
has a multilinear universal \groebner basis.

Let 
\begin{equation*}
  J=(\nu_{ij}:\{i,j\}\in E(G[W]))\subset\kk[Y]
\end{equation*}
be the complementary determinantal edge ideal of $G[W]$ formed using $Y$.
For each $i\in[r]$, the variable $t_i$ is the only variable
remaining in the $i$th grading block.
Applying Lemma~\ref{lem:inheritance}(c) successively, we
set $t_1=\cdots=t_r=1$.
The resulting ideal is precisely $J$.
We conclude that $J_c(G[W])$ has a
multilinear universal \groebner basis.
\end{proof}

\begin{lemma}\label{lemma:matching-six}
Let $n = 6$ and let $G_1$ and $G_2$ be matchings with
\begin{equation*}
  E(G_1)=\{\{1,4\},\{2,5\}\},\qquad E(G_2)=\{\{1,4\},\{2,5\},\{3,6\}\}.
\end{equation*}
Then, neither $J_c(G_1)$ nor $J_c(G_2)$ has a multilinear universal \groebner basis.
\end{lemma}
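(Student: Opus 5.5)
The plan follows the proof of Lemma~\ref{lemma:3 times 4}: fix one monomial order, exhibit a minimal generator of the initial ideal that is not multilinear, and conclude that no multilinear universal \groebner basis exists. Here $X$ is a $4\times 6$ generic matrix. For $G_1$ we have $J_c(G_1)=(\mu_{14},\mu_{25})$, with degrees $\one-\eb_1-\eb_4$ and $\one-\eb_2-\eb_5$. Both generators contain columns $3$ and $6$, and otherwise $\mu_{14}$ uses columns $2,5$ while $\mu_{25}$ uses columns $1,4$.

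\textbf{Reduction for $G_1$.} First I will shrink the problem using Lemma~\ref{lem:inheritance}(b),(c). Choose a specialization that sets entries of $X$ to zero and isolates a single variable $t$ in column $6$, placed in a row that is then eliminated. Setting $t=1$ by Lemma~\ref{lem:inheritance}(c) turns both generators into $3\times 3$ minors in columns $\{2,3,5\}$ and $\{1,3,4\}$ of a $3\times 5$ matrix. If I can specialize further so that column $3$ also contributes only a unit, the picture becomes two $2\times 2$ minors on disjoint column sets, which form a regular sequence and are harmless. So the reduction has to keep enough of column $3$ to couple the two minors. The step is therefore to find a zero pattern in which the shared columns still create an $S$-polynomial whose reduction gives a non-multilinear leading term.

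\textbf{Main computation (expected obstacle).} I would first try the lexicographic order from the section directly on the two generators, after killing some variables. Let $f=S_\preceq(\mu_{14},\mu_{25})$. Its degree is $\one+\eb_3+\eb_6-\ldots$, and since the two leading monomials share variables from columns $3$ and $6$, its multidegree has entries at most $1$ in every column. This matches the role of $f$ in Lemma~\ref{lemma:3 times 4}. The next step is to form $h=S_\preceq(f,\mu_{14})$ or $S_\preceq(f,\mu_{25})$, which lies in a degree of the form $(\ldots,2,\ldots)$. I then need to check that $\ini_\preceq(h)$, after full reduction, is not divisible by the initial terms of the multilinear part of $\ini_\preceq(J_c(G_1))$.

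The hard part is showing that the multilinear part of $\ini_\preceq(J_c(G_1))$ is exactly generated by $\ini(\mu_{14})$, $\ini(\mu_{25})$ and $\ini(f)$. This holds because the multilinear degrees of the ideal are only $\one-\eb_1-\eb_4$, $\one-\eb_2-\eb_5$, the degrees lying between these and $\one$, and $\one$ itself. I expect to verify this by a finite dimension count in each such degree, or by a computer-algebra check, as in the paper's $3\times4$ case. If a direct choice of order fails, I will try other lex orders obtained by permuting variables, since only one bad order is needed.

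\textbf{Case $G_2$.} The natural move is elimination, Lemma~\ref{lem:inheritance}(a). The difficulty is that $\mu_{36}$ uses columns $1,2,4,5$, so no column subring isolates $\mu_{14}$ and $\mu_{25}$. Instead I will apply Lemma~\ref{lem:inheritance}(b): set to zero all of column $6$ except one variable $t$ in a row, then set $t=1$. This kills $\mu_{14}$ and $\mu_{25}$ unless the chosen row is kept. A cleaner route is to zero out the variables $x_{i,3}$ for all $i$ but one, which keeps $\mu_{36}$ nonzero.

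Because this is delicate, I would ultimately handle $G_2$ with the same computational strategy as $G_1$. By symmetry, the $S$-polynomial of $\mu_{14}$ and $\mu_{25}$ should again yield a non-multilinear minimal generator, provided $\mu_{36}$ does not interfere in the relevant degree $(1,1,2,1,1,1)$-type. This is plausible because $\mu_{36}$ has zero in the third column, whereas that degree needs column $3$ to appear twice. Verifying this non-interference is the key check.
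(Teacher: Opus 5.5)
There is a genuine gap. The step that carries the whole lemma is never performed: exhibiting, for some monomial order, a non-multilinear minimal generator of $\ini_\preceq(J_c(G_i))$. You do not compute $f$, $h$, their normal forms, or their leading monomials. You defer the decisive check to ``a finite dimension count \ldots\ or a computer-algebra check'', and to trying other lex orders if the first fails. That is a search strategy, not a proof.

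Moreover, your concrete reductions point the wrong way.
\begin{itemize}
\item For $G_1$, making column $6$ a single variable $t$ and setting $t=1$ leaves $\det Z_{[3],\{2,3,5\}}$ and $\det Z_{[3],\{1,3,4\}}$, which share only column $3$. Each principal ideal is CS, so by Proposition~\ref{proposition:connect one column} (order the columns as $2,5,3,1,4$) their sum is CS. It therefore has a multilinear universal \groebner basis, and no $S$-polynomial there can give a contradiction, however much of column $3$ you keep.
\item For $G_2$, your reason why $\mu_{36}$ does not interfere is backwards. Since $\deg\mu_{36}=(1,1,0,1,1,0)$, the products $x_{a3}x_{b3}x_{c6}\mu_{36}$ lie exactly in degree $(1,1,2,1,1,1)$. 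Also, $\ini(\mu_{36})=x_{11}x_{22}x_{34}x_{45}$ and the new leading monomials that $S_{\eb_3+\eb_6}\mu_{36}$ creates in degree $\one$ enlarge the set of multilinear monomials you must rule out as divisors of $\ini(h)$.
\item Isolating column $6$ does not kill $\mu_{14}$ or $\mu_{25}$; each becomes $\pm t$ times a $3\times 3$ minor. Keeping $\mu_{36}$ nonzero works against you.
\end{itemize}

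The missing idea is to collapse the two \emph{private} columns $1$ and $2$ into one row, instead of collapsing a shared column. Keep $x_{1,1}=t_1$ and $x_{1,2}=t_2$, and set every other entry of columns $1,2$ and of row $1$ to zero. Expanding along row $1$ gives $\mu_{14}\mapsto t_2q_2$ and $\mu_{25}\mapsto t_1q_3$. Here the $q_j$ are the maximal minors of the generic $3\times4$ matrix $Y=X_{\{2,3,4\},\{3,4,5,6\}}$. Meanwhile $\mu_{36}\mapsto0$, because columns $1$ and $2$ become proportional. Since $t_1$ and $t_2$ are alone in their grading blocks, Lemma~\ref{lem:inheritance}(b) and (c) reduce both $J_c(G_1)$ and $J_c(G_2)$ to $(q_2,q_3)$. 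These are two minors sharing two columns, which contradicts Lemma~\ref{lemma:3 times 4}. This handles both graphs at once and needs no new \groebner basis computation.
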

\begin{proof}
Fix $i\in\{1,2\}$ and suppose that
$J_c(G_i)$ has a multilinear universal \groebner basis.
By setting some variables of $X$ equal to zero and replacing $x_{1,1}$ and $x_{1,2}$
by $t_1$ and $t_2$, respectively,
we obtain the matrix
\[
  X'=
  \begin{pmatrix}
    t_1 & t_2 & 0       & 0       & 0       & 0\\
    0   & 0   & x_{2,3} & x_{2,4} & x_{2,5} & x_{2,6}\\
    0   & 0   & x_{3,3} & x_{3,4} & x_{3,5} & x_{3,6}\\
    0   & 0   & x_{4,3} & x_{4,4} & x_{4,5} & x_{4,6}
  \end{pmatrix}.
\]
Let $Y = X'_{\{2,3,4\}, \{3,4,5,6\}}$ be a $3 \times 4$ generic matrix. For $j\in[4]$, put
\[
  q_j=\det Y_{[3],[4]\setminus\{j\}}.
\]
Expanding along the first row gives
\[
  \mu_{14}(X')=t_2q_2,
  \qquad
  \mu_{25}(X')=t_1q_3,
  \qquad
  \mu_{36}(X')=0.
\]
Thus, for either choice of $i$, the image of $J_C(G_i)$
under this specialization is
\[
  J=(t_2q_2,t_1q_3) \subset\kk[Y, t_1, t_2].
\]
By Lemma~\ref{lem:inheritance}(b), the ideal $J$ has a
multilinear universal \groebner basis.

In this grading, $t_1$ and $t_2$ are the only variables
in the first and second grading blocks, respectively.
Applying Lemma~\ref{lem:inheritance}(c) twice, we set $t_1 = t_2 = 1$.
Then, the ideal
\[
  (q_2,q_3)\subset\kk[Y]
\]
has a multilinear universal \groebner basis.
Therefore, the ideal generated by two minors of $Y$ has a multilinear universal \groebner basis.

However, this contradicts Lemma~\ref{lemma:3 times 4}.
Since $i$ was arbitrary, neither $J_c(G_1)$ nor $J_c(G_2)$
has a multilinear universal \groebner basis.
\end{proof}

\begin{corollary}\label{cor:matching-general}
If $n\geq6$ and $G$ is a matching with at least two edges, then
$J_{c}(G)$ does not have any multilinear universal \groebner basis.
In particular, $J_{c}(G)$ is not CS.
\end{corollary}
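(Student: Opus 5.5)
The plan is to reduce to the two $n=6$ configurations of Lemma~\ref{lemma:matching-six} via Lemma~\ref{lemma:induced}, arguing by contradiction. Suppose $n\geq 6$, $G$ is a matching with at least two edges, and $J_c(G)$ has a multilinear universal \groebner basis. Choose two distinct edges $\{a,b\}$ and $\{c,d\}$ of $G$. Since $G$ is a matching, these four vertices are distinct.

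I would then choose a set $W\subset[n]$ with $|W|=6$ and $\{a,b,c,d\}\subset W$, which is possible because $n\geq6$. There are two cases. If $G$ has a third edge, I take $W$ to be the union of three edges. Otherwise, I add two further vertices, chosen so that the induced subgraph $G[W]$ has either exactly the two edges or exactly three disjoint edges. Since $G$ is a matching, $G[W]$ is again a matching, and it contains at least the two chosen edges. Two additional vertices contribute at most one edge inside $W$, since an edge from $a$ to one of them would contradict the matching property. So $G[W]$ has either two or three edges, each on disjoint vertices.

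After relabelling $W$ as $\{1,\ldots,6\}$, $G[W]$ is isomorphic to $G_1$ or $G_2$. This relabelling is harmless: a permutation of columns is a grading-compatible change of variables, it permutes the minors $\mu_{ij}$, and it preserves the existence of a multilinear universal \groebner basis. Lemma~\ref{lemma:induced} applies since $|W|=6\geq5$, and it shows that $J_c(G[W])$ has a multilinear universal \groebner basis. This contradicts Lemma~\ref{lemma:matching-six}.

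The final assertion follows from Proposition~\ref{proposition:generic initial ideal of Cartwright--Sturmfels ideal} and the remark after it: every CS ideal has a multilinear universal \groebner basis, so $J_c(G)$ is not CS. The only point needing care is the vertex selection in the second case, and it is immediate because $G$ is a matching; I expect no real obstacle.
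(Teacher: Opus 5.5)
Your proposal is correct and follows essentially the same route as the paper: choose a six-vertex set $W$ (two edges plus two isolated vertices, or three edges) so that $G[W]$ is $G_1$ or $G_2$ up to relabelling, pass the multilinear universal Gr\"obner basis to $J_c(G[W])$ via Lemma~\ref{lemma:induced}, and contradict Lemma~\ref{lemma:matching-six}. Your explicit remarks on why column relabelling is harmless, and on deducing the non-CS statement from Proposition~\ref{proposition:generic initial ideal of Cartwright--Sturmfels ideal}, fill in points the paper leaves implicit.
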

\begin{proof}
  We assume that $J_{c}(G)$ has a multilinear universal \groebner basis.
If $G$ has exactly two edges, let $W \subset [n]$ be a subset consisting of
their four endpoints and two
isolated vertices. If $G$ has at least three edges, let $W \subset [n]$ be a subset
consisting of
the six endpoints of three of them. In either case the induced six-vertex graph $G[W]$
is, after relabeling, one of the graphs in Lemma~\ref{lemma:matching-six}.
By Lemma~\ref{lemma:induced}, the ideal $J_c(G[W])$ has a multilinear universal \groebner
basis. However, this contradicts Lemma~\ref{lemma:matching-six}.
\end{proof}

\subsection{Cartwright--Sturmfels ideals for the remaining graphs}
In the previous subsection, we identify matchings whose
complementary determinantal edge ideals are not CS
and complete the proof of the implication \textup{(ii)}$\Rightarrow$\textup{(iii)} in
Theorem~\ref{thm:main}.
In this subsection, we then show that the
complementary determinantal edge ideals of all remaining graphs are CS
and prove the implication \textup{(iii)}$\Rightarrow$\textup{(i)}.

\begin{lemma}\label{lem:five-matching-CS}
If $n=5$ and $G$ is a matching consisting of two edges, then $J_{c}(G)$ is CS.
\end{lemma}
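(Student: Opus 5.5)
The plan is to realize $J_c(G)$ as a sum of two principal ideals, each generated by a single multilinear maximal minor, whose column supports overlap in exactly one column, and then to apply Proposition~\ref{proposition:connect one column}.

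\emph{Relabeling.} Since $n=5$ and $G$ is a matching with two edges, exactly one vertex of $G$ is isolated. Permuting the columns of $X$ permutes the grading blocks, and the CS property is invariant under such a relabeling. So we may assume
\[
E(G)=\{\{1,2\},\{4,5\}\},
\]
with vertex $3$ isolated. Here $X$ is a generic $3\times 5$ matrix, and
\[
J_c(G)=(\mu_{12},\mu_{45}),\qquad \mu_{12}=\det X_{[3],\{3,4,5\}},\qquad \mu_{45}=\det X_{[3],\{1,2,3\}}.
\]

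\emph{Degrees.} Each generator is a maximal minor, hence multilinear. Its degrees are
\[
\deg\mu_{45}=\eb_1+\eb_2+\eb_3,\qquad \deg\mu_{12}=\eb_3+\eb_4+\eb_5.
\]

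\emph{Each summand is CS.} As noted at the beginning of this section, a principal ideal generated by a multilinear homogeneous polynomial is CS. Hence $I=(\mu_{45})$ and $J=(\mu_{12})$ are both CS.

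\emph{Gluing.} With $\ell=3$, the ideal $I$ is generated in degree at most $\eb_1+\cdots+\eb_\ell$. The ideal $J$ is generated in degree at most $\eb_\ell+\cdots+\eb_5$. Proposition~\ref{proposition:connect one column} then gives that $I+J=J_c(G)$ is CS.

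\emph{Where care is needed.} The only step needing any attention is the relabeling. The shared column must be placed in the middle position $\ell$, so that the two supports are an initial segment and a terminal segment of the column order. This is exactly why the isolated vertex is labeled $3$. No Gröbner basis computation is needed.
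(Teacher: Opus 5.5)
Your proof is correct and follows the paper's argument. Both write $J_c(G)$ as the sum of two principal ideals, each generated by a multilinear maximal minor, whose column supports meet only in the column of the isolated vertex, and then glue them with Proposition~\ref{proposition:connect one column}. Your explicit relabeling, which places the isolated vertex at position $\ell=3$, is slightly more careful than the paper. The paper uses $E(G)=\{\{1,2\},\{3,4\}\}$, so the shared column is $5$, and the two supports become an initial and a terminal segment only after an unstated reordering of the columns.
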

\begin{proof}
  It suffices to consider the graph $G$ with the edge set $E(G) = \{\{1,2\}, \{3,4\}\}$.
  Then,
  \begin{equation*}
    J_{c}(G) = (\mu_{12}) + (\mu_{34}).
  \end{equation*}
  The degree of $\mu_{12}$ is $\eb_3+\eb_4+\eb_5$.
  The degree of $\mu_{34}$ is $\eb_1+\eb_2+\eb_5$.
  By Proposition~\ref{proposition:connect one column}, the ideal $J_{c}(G)$ is CS.
\end{proof}

Let $m = n-2$ and let
\[
 J=I_{m}(X),\qquad J_v=I_{m}(X_{[m],U_v})
\]
be the determinantal ideals generated by the maximal minors of
$X$ and $X_{[m],U_v}$, respectively.
We use the fact, proved in~\cite{ConcaDeNegriGorlaUGBforMM}, that $J$ and $J_v$ are CS.
Let $\mathfrak c_v=(x_{1v},\ldots,x_{mv})$.
The ideals $\mathfrak c_v$, $\mathfrak c_i+\mathfrak c_j$, and finite
intersections of such ideals satisfy the hypotheses of
Lemma~\ref{lem:filter}.

For $U_v=\{u_1<\cdots<u_{m+1}\}$ and $a\in[m]$, one has
\begin{equation*}\label{eq:EN}
 \sum_{k=1}^{m+1}(-1)^{k-1}x_{a,u_k}\mu_{v,u_k}=0.
\end{equation*}
Indeed, if we prepend row $a$ of $X_{[m],U_v}$ to that matrix and expand the
resulting determinant along the first row, we obtain $0$.
If $N_{\overline{G}}(v)=\{w\}$, where $\overline{G}$ is the complement graph of $G$,
all terms except the one indexed
by $w$ involve generating minors of $J_{c}(G)$. Consequently,
\begin{equation}\label{eq:one-missing-minor}
 x_{aw}\mu_{vw}\in J_{c}(G)\qquad(a\in[m]).
\end{equation}

\begin{proposition}\label{prop:positive}
Each graph in Lemma~\ref{lem:graph-classification}(b)--(f) has a CS complementary determinantal edge
ideal.
\end{proposition}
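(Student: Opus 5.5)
The approach is to write $J_{c}(G)$, for each family (b)--(f), as either an extension of one of the CS determinantal ideals $J=I_m(X)$ or $J_v=I_m(X_{[m],U_v})$, or as its intersection with a filter ideal $Q$ built from the $\mathfrak c_i$. Then Lemma~\ref{lem:filter} and the CS property of $J$ and $J_v$ from~\cite{ConcaDeNegriGorlaUGBforMM} give the result. Two observations are used throughout. First, $\mu_{ij}$ has degree $\one-\eb_i-\eb_j$. Second, \eqref{eq:one-missing-minor} says that if $i$ has a single non-neighbour $j$, then every multiple of $\mu_{ij}$ by a variable of column $i$ or $j$ already lies in $J_{c}(G)$.

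\emph{Case (b).} $G=K_{1,n-1}$ with centre $v$. Here $J_{c}(G)=J_vS$. Adjoining the column-$v$ variables multiplies the Hilbert series by a factor depending only on $t_v$, so the extension stays CS: its gin is $\gin(J_v)S$, which is squarefree and Borel-fixed.

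\emph{Case (c).} The centre is $v$ and $w$ is isolated, so the generators are $\mu_{vu}$ with $u\neq v,w$. I claim $J_{c}(G)=J_vS\cap\mathfrak c_w$. For ``$\subseteq$'', every such $\mu_{vu}$ involves column $w$. For ``$\supseteq$'', take a degree $\ab$ with $a_w\ge1$. Then $(J_v)_\ab$ is spanned by monomials times maximal minors of $X_{[m],U_v}$. Those minors other than $\mu_{vw}$ are generators. A multiple of $\mu_{vw}$ in this degree has a column-$w$ variable in its monomial, so it lies in $J_{c}(G)$ by \eqref{eq:one-missing-minor}, applied with $N_{\overline G}(v)=\{w\}$ inside $X_{[m],U_v}$. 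Lemma~\ref{lem:filter} then applies.

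\emph{Cases (d) and (f).} These go the same way. For (d), with isolated vertex $v$, I will show $J_{c}(G)=J\cap\mathfrak c_v$. The generators are exactly the maximal minors containing column $v$. Conversely, in a degree with $a_v\ge1$, a multiple of some $\mu_{vw}$ must carry a column-$v$ variable. Such a product lies in $J_{c}(G)$ by the expansion identity along $U_w$, since all other terms there are minors containing column $v$. For (f), with $M$ the removed matching, I will show
\[
J_{c}(G)=J\cap\bigcap_{ij\in M}(\mathfrak c_i+\mathfrak c_j).
\]
The point is again \eqref{eq:one-missing-minor}: for $ij\in M$ we have $N_{\overline G}(i)=\{j\}$, so multiples of $\mu_{ij}$ by variables of column $i$ or $j$ lie in $J_{c}(G)$. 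In a degree allowed by $Q$, any multiple of $\mu_{ij}$ carries such a variable. Distinct matching edges do not interfere, because the generator degrees differ.

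\emph{Case (e).} This is the hardest case, and I expect it to be the main obstacle. The generators are all $\mu_{ij}$ with $i,j\neq v$, together with $\mu_{vw}$. It does not seem to be a plain filter intersection. I would split the degrees according to $a_v$.
\begin{itemize}
\item For $a_v\ge1$, I expect $J_{c}(G)_\ab=J_\ab$, by the argument of case (d), since $\mu_{vw}$ times a column-$v$ variable is handled in the same way.
\item For $a_v=0$, only the generator $\mu_{vw}$ contributes, so $J_{c}(G)_\ab=(\mu_{vw})_\ab$.
\end{itemize}
Hence $J_{c}(G)=(J\cap\mathfrak c_v)+(\mu_{vw})$, and its Hilbert function is that of $J$ when $a_v\ge1$ and that of a principal ideal when $a_v=0$. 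I would then exhibit the candidate
\[
J'=\bigl(\gin(J)\cap\mathfrak c_v\bigr)+(m),
\]
where $m$ is the product of the first variables of the columns other than $v,w$, that is, the Borel-fixed generic initial monomial of $\mu_{vw}$. This $J'$ is squarefree and Borel-fixed. To confirm that $J'$ has the right Hilbert function in degrees with $a_v\ge1$, I must check that multiples of $m$ in those degrees already lie in $\gin(J)$. This holds because $\gin(J)$ is Borel-fixed and nonzero in degree $\one-\eb_v-\eb_w$, so it contains $m$. With that, $J'$ has the same multigraded Hilbert series as $J_{c}(G)$, and $J_{c}(G)$ is CS by Definition~\ref{definition:Cartwright--Sturmfels ideal}.
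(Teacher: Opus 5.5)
Your proposal is correct and follows the paper's proof essentially case by case. You use the same identifications $J_{c}(G)=J_v$, $J_v\cap\mathfrak c_w$, $J\cap\mathfrak c_v$ and $J\cap Q_M$, obtained from \eqref{eq:one-missing-minor} and Lemma~\ref{lem:filter}, and in case (e) the same split on $a_v$ leading to the ideal $(\gin_{\preceq}(J)\cap\mathfrak c_v)+(\ini_{\preceq}(\gamma\mu_{vw}))$. The only difference is cosmetic: in (e) you check directly that this ideal is squarefree, Borel-fixed and has the Hilbert series of $J_{c}(G)$, instead of identifying it with $\gin_{\preceq}(J_{c}(G))$; this includes the check $m\in\gin_{\preceq}(J)$, which the paper leaves implicit.
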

\begin{proof}
\noindent\textbf{(b)}:
If $G$ is a complete star $K_{1, n-1}$ centered at $v$, then $J_{c}(G)=J_v$,
which is CS.

\smallskip
\noindent\textbf{(c)}:
Suppose that $G$ is a star $K_{1,n-2}$ centered at $v$,
and let $w$ be the unique vertex that is not adjacent to $v$.
Then $J_{c}(G)=(\mu_{vj}:j\in U_v\setminus\{w\}).$

Consider the homogeneous decomposition $J_v=\bigoplus_{\ab \in\mathbb Z_{\ge0}^n}(J_v)_\ab.$
Since $\mathfrak c_w$ is generated by all variables in column $w$,
\[
  (\mathfrak c_w)_\ab=
  \begin{cases}
    S_\ab, & a_w>0,\\
    0, & a_w=0,
  \end{cases}
\]
where $a_w$ is the $w$th component of $\ab$.
Consequently,
\[
  J_v\cap\mathfrak c_w
  =
  \bigoplus_{\substack{\ab\in\mathbb Z_{\ge0}^n\\
                       a_w>0}}
    (J_v)_\ab.
\]
Thus $J_v\cap\mathfrak c_w$ is obtained by retaining precisely
the components of $J_v$ whose degree in column $w$ is positive.

We claim that
\begin{equation}\label{eq:almost-star}
  J_{c}(G)=J_v\cap\mathfrak c_w.
\end{equation}
For every $j\in U_v\setminus\{w\}$, the minor $\mu_{vj}$
uses column $w$, so it has degree one in the $w$th column.
Hence every generator of $J_{c}(G)$ belongs to
$J_v\cap\mathfrak c_w$.

For the reverse inclusion, it suffices to take a homogeneous
$f\in(J_v)_\ab$ with $a_w>0$.
Since $J_v$ is generated by the homogeneous polynomials
$\mu_{vj}$, we may write
\[
  f=
  \sum_{j\in U_v\setminus\{w\}}h_j\mu_{vj}
  +h_w\mu_{vw},
\]
where each nonzero $h_j$ is homogeneous of degree
$\ab-\deg(\mu_{vj})$.
The terms in the first sum belong to $J_{c}(G)$.
Since $\mu_{vw}$ does not use column $w$, the
$w$th component of $\deg(h_w)$ is positive.
Thus $h_w\in\mathfrak c_w$.

At the vertex $v$, the only missing neighbor is $w$.
By~\eqref{eq:one-missing-minor}, we have
\[
  x_{aw}\mu_{vw}\in J_{c}(G)
  \qquad(a\in[m]),
\]
and hence
\[
  \mathfrak c_w\mu_{vw}\subset J_{c}(G).
\]
It follows that $h_w\mu_{vw}\in J_{c}(G)$, and therefore $f\in J_{c}(G)$.
This proves~\eqref{eq:almost-star}.
Since $J_v$ is CS, Lemma~\ref{lem:filter} shows that $J_{c}(G)$ is CS.

\smallskip
\noindent\textbf{(d)}:
Suppose that $G[U_v]$ is complete and $v$ is isolated.
Using the same argument as in the previous case, we prove
\begin{equation}\label{eq:isolated}
  J_{c}(G)=J\cap\mathfrak c_v
  =
  \bigoplus_{\substack{\ab\in\mathbb Z_{\ge0}^n\\
                       a_v>0}}
    J_\ab.
\end{equation}
Indeed, every generator $\mu_{ij}$ of $J_{c}(G)$ has $i,j\neq v$,
so it uses column $v$.
This proves $J_{c}(G)\subset J\cap\mathfrak c_v$.

Conversely, let $f\in J_\ab$ be homogeneous with
$a_v>0$.
Express $f$ as a sum of homogeneous multiples of the maximal
minors of $X$.
The minors not belonging to the generating set of $J_{c}(G)$
are precisely $\mu_{vj}$ with $j\in U_v$.
Since each $\mu_{vj}$ has degree zero in column $v$,
its coefficient in this expression belongs to $\mathfrak c_v$.

For every $j\in U_v$, the only missing neighbor of $j$ is $v$.
Thus~\eqref{eq:one-missing-minor} gives
\[
  x_{av}\mu_{vj}\in J_{c}(G)
  \qquad(a\in[m],\ j\in U_v).
\]
Therefore, $f$ belongs to $J_{c}(G)$,
proving~\eqref{eq:isolated}.
Lemma~\ref{lem:filter}, applied to $J$ and $\mathfrak c_v$,
proves that $J_{c}(G)$ is CS.

\smallskip
\noindent\textbf{(e)}:
Suppose that $G[U_v]$ is complete and $N_G(v)=\{w\}$.
Adding the edge $vw$ to the graph in the preceding case gives
\begin{equation}\label{eq:pendant}
  J_{c}(G)=(J\cap\mathfrak c_v)+(\mu_{vw}).
\end{equation}
Since $\mu_{vw}\in J$ and $\mu_{vw}$ does not use column $v$,
we obtain
\[
  (J_{c}(G))_\ab=
  \begin{cases}
    J_\ab, & a_v>0,\\
    ((\mu_{vw}))_\ab, & a_v=0.
  \end{cases}
\]

Every coordinate change $\gamma \in \calG$ preserves
the grading.
Hence
\[
  (\gamma J_{c}(G))_\ab=
  \begin{cases}
    (\gamma J)_\ab, & a_v>0,\\
    ((\gamma\mu_{vw}))_\ab, & a_v=0.
  \end{cases}
\]
Choosing $\gamma$ sufficiently general,
we obtain
\begin{equation}\label{eq:pendant-gin}
  \gin_{\preceq}(J_{c}(G))
  =
  \bigl(\gin_{\preceq}(J)\cap\mathfrak c_v\bigr)
  +\bigl(\ini_{\preceq}(\gamma\mu_{vw})\bigr).
\end{equation}

Since $J$ is CS, the first summand is a squarefree monomial ideal.
Moreover, $\gamma\mu_{vw}$ has degree
$\one-\eb_v-\eb_w$, so its initial monomial is a squarefree monomial.
Thus the right-hand side of~\eqref{eq:pendant-gin} is generated
by squarefree monomials.
Therefore, $J_{c}(G)$ is CS.

\smallskip
\noindent\textbf{(f)}:
Let $G=K_n\setminus M$, where $M$ is a matching, and set
\[
  Q_M=\bigcap_{\{i,j\}\in M}(\mathfrak c_i+\mathfrak c_j),
\]
with the empty intersection interpreted as $S$.

For each $\{i,j\}\in M$, we have
\[
  (\mathfrak c_i+\mathfrak c_j)_\ab=
  \begin{cases}
    S_\ab, & a_i+a_j>0,\\
    0, & a_i=a_j=0.
  \end{cases}
\]
Consequently,
\[
  (Q_M)_\ab=
  \begin{cases}
    S_\ab,
      & a_i+a_j>0
        \text{ for every }\{i,j\}\in M,\\
    0, & \text{otherwise},
  \end{cases}
\]
and therefore
\[
  J\cap Q_M
  =
  \bigoplus_{\substack{\ab\in\mathbb Z_{\ge0}^n\\
    a_i+a_j>0\ \text{for every }\{i,j\}\in M}}
    J_\ab.
\]
Thus $J\cap Q_M$ retains precisely those components of $J$
whose degrees are positive on at least one endpoint of
every edge in $M$.

We claim that
\begin{equation}\label{eq:deleted-matching}
  J_{c}(G)=J\cap Q_M.
\end{equation}
First, let $\mu_{ab}$ be a generator of $J_{c}(G)$, and fix a
edge $\{i,j\}\in M$.
Since $\{a,b\}\neq\{i,j\}$, the column set
$[n]\setminus\{a,b\}$ contains at least one of $i,j$.
Therefore, we have $\mu_{ab}\in\mathfrak c_i+\mathfrak c_j$ for every $\{i,j\} \in M$
and we have $J_{c}(G)\subset J\cap Q_M.$

For the reverse inclusion, we first consider a edge $\{i,j\}\in M$.
Since $M$ is a matching, $j$ is the only non-neighbor of $i$,
and $i$ is the only non-neighbor of $j$.
Applying~\eqref{eq:one-missing-minor} at these two vertices yields
\[
  x_{ai}\mu_{ij}\in J_{c}(G),
  \qquad
  x_{aj}\mu_{ij}\in J_{c}(G)
  \qquad(a\in[m]).
\]
It follows that
\[
  (\mathfrak c_i+\mathfrak c_j)\mu_{ij}\subset J_{c}(G).
\]

Now take a nonzero homogeneous polynomial
$f\in(J\cap Q_M)_\ab$.
By the description of $(Q_M)_\ab$, its degree satisfies
$a_i+a_j>0$ for every $\{i,j\}\in M.$
Since $f\in J$, we may write
\[
  f=
  \sum_{\{a,b\}\in E(G)}h_{ab}\mu_{ab}
  +
  \sum_{\{i,j\}\in M}h_{ij}\mu_{ij},
\]
choosing each nonzero coefficient to be homogeneous so that
every nonzero summand has degree $\ab$.
All summands in the first sum belong to $J_{c}(G)$.

We consider the summands in the second sum. For each $\{i,j\} \in M$,
the minor $\mu_{ij}$ has degree zero in both columns $i$ and $j$.
Thus the $i$th and $j$th components of $\deg(h_{ij})$
are $a_i$ and $a_j$, respectively.
Since $a_i + a_j > 0$,  we have $h_{ij}\in\mathfrak c_i+\mathfrak c_j$.
Thus, we have
\[
  h_{ij}\mu_{ij}\in (\mathfrak c_i+\mathfrak c_j)\mu_{ij}\subset J_{c}(G).
\]
Therefore, every summand belongs to $J_{c}(G)$, proving
$f\in J_{c}(G)$ and establishing~\eqref{eq:deleted-matching}.

Since $Q_M$ satisfies the hypotheses of Lemma~\ref{lem:filter},
and since $J$ is CS, $J_{c}(G)$ is CS.
\end{proof}

\begin{proof}[Proof of Theorem~\ref{thm:main}]
The implication (i)$\Rightarrow$(ii) follows from
Proposition~\ref{proposition:generic initial ideal of Cartwright--Sturmfels ideal}.
Proposition~\ref{prop:degrees}, Lemma~\ref{lem:graph-classification} and
Corollary~\ref{cor:matching-general} give (ii)$\Rightarrow$(iii).
Finally, Lemma~\ref{lem:five-matching-CS} and Proposition~\ref{prop:positive}
give (iii)$\Rightarrow$(i).
\end{proof}

\begin{corollary}
  If $G$ satisfies condition (iii) of Theorem~\ref{thm:main},
  then $J_{c}(G)$ is radical.
\end{corollary}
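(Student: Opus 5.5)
The plan is to go through the CS property. Suppose $G$ satisfies condition (iii). By the implication (iii)$\Rightarrow$(i) of Theorem~\ref{thm:main}, $J_{c}(G)$ is CS for the column grading. Proposition~\ref{proposition:generic initial ideal of Cartwright--Sturmfels ideal} then says that every CS ideal is radical, and the corollary follows at once.

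The only point needing care is the field. The characterization of CS ideals through multigraded generic initial ideals assumes $\kk$ infinite. Definition~\ref{definition:Cartwright--Sturmfels ideal}, however, is purely Hilbert-series theoretic, so I will argue as follows.

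\begin{enumerate}[(1)]
\item Let $\mathbb{L}$ be an infinite extension of $\kk$. The generators $\mu_{ij}$ have integer coefficients, so Buchberger's algorithm for $J_{c}(G)$ runs entirely over $\kk$. Hence $J_{c}(G)\otimes_\kk\mathbb{L}$ has the same multigraded Hilbert series as $J_{c}(G)$, and it has the same initial ideals.
\item By Proposition~\ref{proposition:generic initial ideal of Cartwright--Sturmfels ideal} applied over $\mathbb{L}$, every initial ideal of $J_{c}(G)\otimes_\kk\mathbb{L}$ is squarefree. In particular $\ini_{\preceq}(J_{c}(G))$ is squarefree, being the same monomial ideal over $\kk$.
\item A homogeneous ideal with a squarefree initial ideal is radical. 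This is the standard argument: if $f^k\in I$ with $f\notin I$, replace $f$ by its normal form and take $k$ minimal. Then $\ini(f)^k\in\ini(I)$, and since $\ini(I)$ is squarefree this forces $\ini(f)\in\ini(I)$, contradicting that $f$ is reduced.
\end{enumerate}

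There is essentially no obstacle here. The substantive work is already contained in Lemma~\ref{lem:five-matching-CS} and Proposition~\ref{prop:positive}.
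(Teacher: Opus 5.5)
Your proposal is correct and follows the paper's proof: (iii)$\Rightarrow$(i) of Theorem~\ref{thm:main} gives the CS property, and Proposition~\ref{proposition:generic initial ideal of Cartwright--Sturmfels ideal} then gives radicality. Your extra step is a valid refinement that the paper leaves implicit: passing to an infinite extension field, where Gr\"obner bases, initial ideals and Hilbert series are unchanged, and then using that a squarefree initial ideal forces radicality covers the case of a finite base field, where generic initial ideals are not available.
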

\begin{proof}
  It follows from Theorem~\ref{thm:main} and Proposition~\ref{proposition:generic initial ideal of Cartwright--Sturmfels ideal}.
\end{proof}

\subsection*{Acknowledgements}
The author would like to thank Akihiro Higashitani for helpful discussions.
The author gratefully acknowledges the assistance of generative AI,
particularly ChatGPT (OpenAI), during the exploratory and editorial stages
of this work. All mathematical
statements, proofs, and references remain the sole responsibility of the
author. The author is indebted to the developers of the Julia package
Oscar~\cite{OSCAR}, which enabled explicit computations that were crucial
in discovering our results. The author gratefully acknowledges support
from JST SPRING, Grant Number JPMJSP2138.

\bibliography{references}
\bibliographystyle{amsplain}
\end{document}